\theoremstyle{plain}
\newtheorem{theorem}{Theorem}
\newtheorem{prop}[theorem]{Proposition}
\newtheorem{cor}{Corollary}[theorem]
\theoremstyle{definition}
\newtheorem*{definition}{Definition}
\newtheorem*{remark}{Remark}
\newtheorem{eg}{Example}
\DeclareMathOperator{\cp}{cp}
\newcommand{\nm}{\trianglelefteq}
\newcommand{\Z}{\mathbb{Z}}
\newcommand{\N}{\mathbb{N}}
\newcommand{\bP}{\mathbb{P}}
\newcommand{\comment}[1]{}
\newcommand{\commentvspace}[1]{}
\newcommand{\cC}{\mathcal{C}}
\setlist[enumerate]{itemsep=0pt, parsep=0pt}}
\begin{document}
\title{Commuting Probability of Finite Groups (Extended)}
\author{Snehinh Sen\\Indian Statistical Institute, Bangalore Centre\\snehinh1math@gmail.com}

\maketitle

\begin{abstract}
The target of this article is to discuss the concept of \textit{commuting probability} of finite groups which, in short, is a probabilistic measure of how abelian our group is. We shall compute the value of commuting probability for many special classes of non-abelian groups and also establish some local and global bounds. We will conclude with a few topics for further reading. 
\end{abstract}

\textbf{Keyword:}
Commuting Probability, Conjugacy Classes, Probability, Group Theory.
\comment{\textbf{MSC:} 11A07, 12E20, 13M05}

\textit{ORCiD:} 0000-0002-7423-0090

\section*{Introduction}

\textit{Commuting probability} is a way of stating ``how abelian'' a group is. It is a natural numerical measure used to answer the question ``when do two elements of a group commute?'' As abelian groups are easier to study, one might then try to use probabilistic methods to prove or disprove some facts about pretty complicated groups by the use of commuting probability. The basic notion was introduced and studied in \cite{erd68}, \cite{gust73} and \cite{jos77}.

The target of this article is to summarise certain known results with proofs accessible to undergraduate students familiar with basic group theory. We will also try to improve certain known results or give an alternate approach on some instances. To keep up with this spirit, the proofs of most of the results are included. Yet certain results, which are undoubtedly worth mentioning, have rather advanced or long proofs. We omit proofs in such cases and provide appropriate references for the interested readers. 

It is very important to note that \textit{commuting probability} is not the only measure of how close to being abelian our group is. Few other measures, after normalisation are - the size of the center (a global measure using subgroups), the sizes of centralisers (a local measure using subgroups), size of the abelianization (a measure using quotients) and the class equation (a measure using conjugacy classes). As we go along, we will try to see how these different measures correlate to commuting probability.

Here is how the rest of the article is organised. The section \textit{Primary Considerations} \ref{sec1} introduces definitions, few basic results and examples. The next section  \textit{The Dihedral, The Symmetric and The Alternating} will focus on explicit computations for these special classes of groups. In the following section \textit{Bounding the Commuting Probability}, we shall establish some global and local bounds, including the very famous \textit{Erdős 5-8 Theorem}. Finally, \textit{Further Adventures} is a selected catalogue of topics for further study. Except the last section, all groups are assumed to be finite unless mentioned otherwise.

To the best of the author's knowledge, certain results presented here have not appeared in the given form elsewhere. These results are Propositions \ref{2}, \ref{20}, \ref{21}, Theorems \ref{19}, \ref{22}  and Corollaries \ref{18.1}, \ref{19.1}. Some of the proofs also differ from the sources. Another key aspect of this article is an intuitive reinterpretation of commuting probability as an \textit{antitone} (order reversing) \textit{information number} of a group, that is, in a very vague and intuitive sense, we argue that, in a fixed set-up, groups with larger commuting probability contain ``lesser (commuting) information''.  

\section{Primary Considerations}\label{sec1}

Let us start with a few definitions. As our group is finite, the most natural probability measure should be the one where elements are chosen uniformly at random. So suppose $G\times G$ is assigned with the discrete uniform distribution. Let $L(G)$ be the event $L(G)=\{(x,y)\in G\times G: xy=yx\}$. So $L(G)$ is the set of all pairs of commuting elements. Commuting probability should thus be the probability of this event $L(G)$ occurring in $G$. 

\begin{definition}
Let $G$ be a finite group. We define the \emph{commuting probability} of $G$ as 
\[\cp(G):= \bP (xy=yx : x, y \in G) = \frac{|L(G)|}{|G\times G|}=\frac{|L(G)|}{|G|^2}\]
where $L(G)=\{(x,y)\in G\times G: xy=yx\}$ is the event that an arbitrary pair $(x,y)\in G\times G$ commutes. 
\end{definition} 

\begin{remark}
Let $g,h\in G$. The \textit{commutator} $[g,h]$ is defined as $g^{-1}h^{-1}gh \in G$. It is called so because $[g,h]=1$ if and only if $g,h$ commute. So $L(G) = \{(x,y)\in G\times G : [x,y]=1\}$ is an alternate definition.

Given a group $G$, the \textit{commutator subgroup} or \textit{derived subgroup} $G'=[G,G]$ is defined as the subgroup generated by all the commutators $[g,h]$. 

It turns out to be normal and $Ab(G)=G/G'$ is the \textit{largest} abelian quotient of $G$ and is thus called its \textit{abelianization}.
\end{remark}

To lay the foundations, let us see how \textit{abelian-ness} translates for the aforementioned measures and try to give an elementary bound for commuting probability.
\begin{prop}
\label{1} 
The following are equivalent for a group $G$.
\begin{enumerate}[label=(\roman*)]
    \item $G$ is abelian.
    \item $Z(G)=G$, where $Z(G)$ is the center of $G$.
    \item $\cp(G)=1$ or equivalently $L(G) = G\times G$.
    \item $Z_G(a)=G$ for each $a\in G$, where $Z_G(a)$ is the centraliser of $a$, which is, by definition, $\{y\in G : ay=ya\}$.
    \item The class equation of $G$ is $1+1+\ldots +1$. That is, $\cC_a=\{a\}$ for each $a\in A$, where $\cC_a$ is the conjugacy class of $a$.
    \item $G'=1$ or equivalently $Ab(G)=G$.
\end{enumerate}
\end{prop}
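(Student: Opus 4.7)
My plan is to use condition (i) as the hub and show that each of (ii)--(vi) is equivalent to it, since all six statements are essentially unpackings of the single idea that every pair of elements commutes. The proofs are definition chases, so I would not spell out every implication in full but rather group them according to which definition is being unwound.

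First, I would handle (i) $\Leftrightarrow$ (ii) and (i) $\Leftrightarrow$ (iv) together, since they are dual: $Z(G)$ is the set of elements commuting with \emph{every} element of $G$, while $Z_G(a)$ is the set of elements commuting with the fixed element $a$. Thus $G$ is abelian exactly when every $a \in G$ lies in $Z(G)$, i.e.\ $Z(G) = G$, and equivalently when every $a$ has $Z_G(a) = G$. Next, for (i) $\Leftrightarrow$ (iii) I would observe that $L(G) \subseteq G \times G$ always, so $\cp(G) = |L(G)|/|G|^2 = 1$ forces $L(G) = G \times G$ (using finiteness), which is exactly the statement that every pair in $G$ commutes; the converse is immediate.

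Then I would treat (i) $\Leftrightarrow$ (v) via the orbit--stabiliser description of conjugacy classes: $\cC_a = \{gag^{-1} : g \in G\}$, so $\cC_a = \{a\}$ for all $a$ says precisely that conjugation is trivial, equivalently $ga = ag$ for all $g, a \in G$. Summing such singletons recovers the stated form of the class equation. Finally, for (i) $\Leftrightarrow$ (vi), I would use the remark's characterisation of $G'$: if $G$ is abelian then $[g,h] = 1$ for all $g,h$, so the generating set of $G'$ is trivial and hence $G' = 1$; conversely if $G' = 1$ then in particular every commutator $[g,h] = 1$, so $gh = hg$ for all $g,h$. The equivalence with $Ab(G) = G$ then follows immediately from $Ab(G) = G/G'$.

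The ``main obstacle'' is really just bookkeeping rather than a mathematical difficulty: choosing an efficient presentation so that six equivalences do not balloon into twelve implications. Using (i) as the pivot keeps the argument linear. The only place where one has to be slightly careful is (vi), where one must remember that $G'$ being trivial means \emph{all} generators $[g,h]$ are trivial, not just some; but this is built into the definition of ``subgroup generated by.'' Everything else is pure translation between the six languages for the same phenomenon.
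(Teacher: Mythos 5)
Your proof is correct: each of (ii)--(vi) is indeed just a reformulation of commutativity, and using (i) as the hub with a definition chase for each equivalence settles the proposition; the only mild care needed (that $G'=1$ means every generator $[g,h]$ is trivial) is handled properly. Note that the paper itself states Proposition \ref{1} without any proof, treating these equivalences as standard background, so your argument simply supplies the routine verification the paper leaves implicit, and there is no alternative approach in the paper to compare against.
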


\begin{prop}\label{2}
For any non-trivial group $G$, that is $|G|\ge 2$, we have $\cp(G)\ge \frac{3|G|-2}{|G|^2}$. Moreover, if $|G|\geq 3$, $\cp(G)\geq \frac{3}{|G|}$.
\end{prop}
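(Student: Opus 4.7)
The plan is to write $|L(G)|$ as a sum of centraliser sizes and then bound each centraliser from below using the cyclic subgroup generated by the element.

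First I would observe that by counting pairs $(x,y)$ with $xy=yx$ by their first coordinate,
\[
|L(G)| = \sum_{x\in G}|Z_G(x)|.
\]
Next I would use the fact that $Z_G(x)\supseteq \langle x\rangle$, so $|Z_G(x)|\geq |\langle x\rangle|$. For $x=e$ this just gives $|Z_G(e)|=|G|$, but for any $x\neq e$ the cyclic subgroup $\langle x\rangle$ has order at least $2$, so $|Z_G(x)|\geq 2$. Summing,
\[
|L(G)| \geq |G| + 2(|G|-1) = 3|G|-2,
\]
which on dividing by $|G|^2$ gives the first bound.

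For the improved bound when $|G|\geq 3$, I would split into two cases according to whether $G$ has an element of order at least $3$. If every non-identity element has order $2$, then $G$ is an elementary abelian $2$-group, hence abelian, so by Proposition \ref{1} we have $\cp(G)=1\geq 3/|G|$ (using $|G|\geq 3$). Otherwise, pick $x\in G$ with $|x|\geq 3$; then $\langle x\rangle\subseteq Z_G(x)$ forces $|Z_G(x)|\geq 3$, and the same applies to $x^{-1}\neq x$. These two elements contribute an extra $2$ to the preceding count, upgrading the bound to
\[
|L(G)| \geq |G| + 2\cdot 3 + 2(|G|-3) = 3|G|,
\]
which gives $\cp(G)\geq 3/|G|$.

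The main point requiring care is the second part: one must notice that the naive inequality $|Z_G(x)|\geq 2$ for $x\neq e$ is off by exactly $2$, and find a cheap way to recover those two units. The observation that an order-$\geq 3$ element $x$ and its inverse $x^{-1}$ are two \emph{distinct} non-identity elements, each with centraliser of size $\geq 3$, is exactly what is needed; and the degenerate alternative (only involutions) is handled trivially since such a group is already abelian. No subtler argument is required.
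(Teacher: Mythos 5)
Your proof is correct and is essentially the same counting as the paper's: your bounds $\{1,x\}\subseteq Z_G(x)$ and, for an element of order at least $3$, a third element of $Z_G(x)$ and $Z_G(x^{-1})$, reproduce exactly the pairs $(1,g),(g,1),(g,g)$ and $(a,a^2),(a^2,a)$ that the paper lists directly, just packaged through the identity $|L(G)|=\sum_{x\in G}|Z_G(x)|$ (the paper's Proposition \ref{3}, which appears later but is proved independently, so there is no circularity). The only cosmetic differences are that you use $x^{-1}$ where the paper uses $a^2$, and that you spell out the all-involutions (hence abelian) case, which the paper handles implicitly by restricting attention to non-abelian $G$.
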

 \commentvspace{-10pt}
\begin{proof}
Observe that for each $g\in G$, $(1,g), (g,1), (g,g)\in L(G)$, so $|L(G)|\ge 2|G|-1+|G|-1=3|G|-2$. Hence, $\cp(G)\ge \frac{3|G|-2}{|G|^2}$. Now if $|G|\geq 3$ and $G$ is non-abelian, then there must exist an element $a$ of order at least $3$. Then $(a, a^2)$ and $(a^2, a) \in G$. Hence, $|L(G)|\geq 3|G|$, giving us the desired result. 
\end{proof}

It can be noted that this is surely not the best of bounds. We will come to this later. For now, let us see some examples.

\begin{eg} Look at $S_3=<x,y|x^3=y^2=1=(xy)^2=1>$, the smallest non-abelian group. Very explicitly calculating:  
    \begin{align*}
    L(G) &= (\{1\}\times G) \cup (G\times \{1\})\cup \{(y,y),(xy,xy),(x^2y,x^2y)\} \\
        &\cup \{(x^k, x^l) : 1\leq k, l\leq 2\}.
    \end{align*}
    So $|L(G)|=18$. Thus, $\cp(S_3)=\frac{1}{2}$. \end{eg}
     \commentvspace{-5pt}

\begin{eg} Let $Q_8$ be the group of quaternions. Again, one may explicitly calculate that $|L(G)|=40$. So here we have $\cp({Q_8})=\frac{5}{8}=0.625>0.5$. 
\end{eg}

So, for the sake of it, one might say that \textit{`` even though $Q_8$ and $S_3$ are both non-abelian, $Q_8$ is 'more abelian' than $S_3$''}.

We now relate centralisers, hence conjugacy classes, to commuting probability. The following two results will be key to our analysis. We follow \cite{gust73}.

\begin{prop}\label{3}
For any group $G$, $|L(G)|=\sum_{x\in G} |Z_G(x)|$ (where $Z_G$ is the centraliser of $x$ in $G$). 
\end{prop}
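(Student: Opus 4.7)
The plan is to prove this by a straightforward double-counting (or Fubini-style) argument, partitioning $L(G)$ according to the first coordinate of each pair.

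First I would recall the definition $L(G)=\{(x,y)\in G\times G : xy=yx\}$ and observe that this set can be written as a disjoint union indexed by the first coordinate:
\[
L(G) = \bigsqcup_{x \in G} \{x\} \times \{y \in G : xy = yx\}.
\]
Next I would identify the fiber over each $x$ with the centraliser: by the very definition $Z_G(x) = \{y \in G : xy = yx\}$, so the fiber $\{y \in G : xy = yx\}$ has exactly $|Z_G(x)|$ elements.

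Summing cardinalities over the disjoint union then yields
\[
|L(G)| = \sum_{x \in G} |Z_G(x)|,
\]
which is the desired identity. One could equally well partition over the second coordinate to obtain the same formula, which serves as a sanity check (and reflects the symmetry $xy=yx \iff yx=xy$).

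There is no real obstacle here; the only subtlety is making explicit that the sets $\{x\}\times Z_G(x)$ are pairwise disjoint (immediate since they have distinct first coordinates) and that their union exhausts $L(G)$ (immediate from the definition of $L(G)$). Everything else is bookkeeping, and no deeper facts about $G$ are required.
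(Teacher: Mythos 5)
Your proposal is correct and is essentially the same argument as the paper's: the paper also writes $L(G)=\coprod_{x\in G}\{x\}\times Z_G(x)$ and sums the fiber sizes. Your extra remarks on disjointness and symmetry in the second coordinate are fine but add nothing beyond the paper's one-line proof.
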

 \commentvspace{-10pt}
\begin{proof}
Note that $L(G)=\{(x,y)\in G^2: xy=yx\}=\coprod_{x\in G}\{x\}\times Z_G(x)$. So $|L(G)|=\sum_{x\in G} |Z_G(x)|$.
\end{proof}

\commentvspace{2mm}

\begin{theorem}[{\cite[Theorem~IV]{erd68}} or  {\cite[p.~1031]{gust73}}]
\label{4}
For any group $G$, if $K$ is the number of conjugacy classes (that is, the \textit{class number} of $G$), then $\cp(G)=\frac{K}{|G|}.$
\end{theorem}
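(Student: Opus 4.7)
The plan is to chain together Proposition \ref{3} with the orbit--stabiliser theorem applied to the conjugation action of $G$ on itself.

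First, I would invoke Proposition \ref{3} to rewrite $|L(G)|$ as $\sum_{x\in G}|Z_G(x)|$. The next ingredient is the standard fact that, under conjugation, the stabiliser of $x$ is exactly $Z_G(x)$ while the orbit is the conjugacy class $\cC_x$; by orbit--stabiliser this gives $|Z_G(x)|=|G|/|\cC_x|$. Substituting, I obtain
\[
|L(G)|=|G|\sum_{x\in G}\frac{1}{|\cC_x|}.
\]

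The key observation is then to regroup the sum on the right by conjugacy classes rather than by elements. If $\cC$ is a conjugacy class, every $x\in\cC$ contributes the same value $1/|\cC|$, and there are exactly $|\cC|$ such $x$, so the contribution of $\cC$ to the sum equals $1$. Summing over the $K$ conjugacy classes yields $\sum_{x\in G}1/|\cC_x|=K$, and hence $|L(G)|=K|G|$. Dividing by $|G|^2$ gives $\cp(G)=K/|G|$.

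There is no real obstacle here; the only thing to be careful about is the bookkeeping when switching the summation from a sum over elements to a sum over classes, which relies on the fact that the conjugacy classes partition $G$. Since Proposition \ref{3} has already been established and orbit--stabiliser is standard, the proof reduces to this one-line regrouping argument.
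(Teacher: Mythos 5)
Your argument is correct and matches the paper's proof essentially step for step: both use Proposition \ref{3}, the identity $|Z_G(x)|\,|\cC_x|=|G|$ from orbit--stabiliser, and the regrouping of $\sum_{x\in G}1/|\cC_x|$ into a sum over the $K$ conjugacy classes (the paper simply runs the computation in the opposite direction, starting from $K$). No gaps.
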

 \commentvspace{-10pt}
\begin{proof}
From group theory, we have that $|Z_G(a)||\cC_a|=|G|$ for every $a\in G$. Also, conjugacy classes partition $G$. Hence, we have 
\[ K = \sum_{g\in G} \frac{1}{|\cC_g|} = \sum_{g\in G} \frac{|Z_G(g)|}{|G|}\]
whence, by Proposition \ref{3}, our claim follows. 
\end{proof}
 \commentvspace{-10pt}
Let us apply Theorem \ref{4} to some small non-abelian groups.
\begin{itemize}
    \item In $S_3$, there are 3 conjugacy classes, so $\cp(S_3)$ is $1/2$.
    \item In $Q_8$, there are 5 conjugacy classes, so $\cp(Q_8)$ is $5/8$.
    \item In $A_5$, there are 5 conjugacy classes and 60 elements, so $\cp(A_5)$ is $1/12$ (without doing 3600 multiplications).
\end{itemize}

\section{The Dihedral, The Symmetric and The Alternating}
\label{sec2}
In this section, we will try to compute the commuting probability of some standard classes of groups. Before proceeding further, here are two standard results (see, for eg. \cite[p.~120,126]{dum03}) from group theory which we will need in this section. 

\begin{prop}\label{5}
For $S_n$, two elements are conjugates if and only if they have the same cycle type. Further, if $G\leq S_n$ then two conjugates have the same cycle type when considered in $S_n$.
\end{prop}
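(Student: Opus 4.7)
The plan is to handle the two assertions in sequence, with the second following almost for free from the first.

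For the first claim in $S_n$, I would begin with the \emph{conjugation-relabels-cycles} identity: for any $\pi, \sigma \in S_n$ and any $a \in \{1, \dots, n\}$, one checks directly that $(\pi\sigma\pi^{-1})(\pi(a)) = \pi(\sigma(a))$. Iterating this shows that if $\sigma$ contains the cycle $(a_1\ a_2\ \cdots\ a_k)$, then $\pi\sigma\pi^{-1}$ contains the cycle $(\pi(a_1)\ \pi(a_2)\ \cdots\ \pi(a_k))$. Since $\pi$ is a bijection, this operation preserves cycle lengths and hence cycle type, proving the forward direction.

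For the converse, suppose $\sigma$ and $\tau$ have the same cycle type. Write out their disjoint cycle decompositions (including fixed points) so that cycles of matching length are paired up:
\[
\sigma = (a_1^{(1)}\ \cdots\ a_{k_1}^{(1)})(a_1^{(2)}\ \cdots\ a_{k_2}^{(2)})\cdots, \qquad \tau = (b_1^{(1)}\ \cdots\ b_{k_1}^{(1)})(b_1^{(2)}\ \cdots\ b_{k_2}^{(2)})\cdots
\]
and define $\pi \in S_n$ by $\pi(a_i^{(j)}) = b_i^{(j)}$. By the cycle-relabeling identity from the first paragraph, $\pi\sigma\pi^{-1}$ has exactly the cycle decomposition of $\tau$, so $\pi\sigma\pi^{-1} = \tau$. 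The main point to be careful about here is covering \emph{all} points, including fixed points, when defining $\pi$; the matching of cycle types (with multiplicities for each length) is precisely what makes $\pi$ well-defined as a bijection.

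The second claim is then immediate: if $G \leq S_n$ and $\sigma, \tau \in G$ are conjugate in $G$ via some $g \in G$, then $\tau = g\sigma g^{-1}$ holds in $S_n$ as well, so by the forward direction $\sigma$ and $\tau$ have the same cycle type in $S_n$. I expect no substantial obstacle; the only subtlety worth flagging to the reader is that the converse of this second statement can fail, i.e.\ same cycle type in $S_n$ need not imply conjugacy inside $G$ (the standard counterexample being $A_n$), and it may be worth a brief remark to that effect to prevent confusion in later sections.
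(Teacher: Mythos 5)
Your proof is correct; the paper itself gives no argument for this proposition, citing it as a standard fact from Dummit--Foote, and your cycle-relabeling argument (conjugation sends the cycle $(a_1\ \cdots\ a_k)$ of $\sigma$ to the cycle $(\pi(a_1)\ \cdots\ \pi(a_k))$ of $\pi\sigma\pi^{-1}$, plus the explicit construction of $\pi$ for the converse) is exactly the standard proof found there. Your closing remark that same cycle type need not imply conjugacy inside a subgroup $G$ is apt, since that distinction is precisely what the paper exploits later in Theorem \ref{10} on the splitting of conjugacy classes in $A_n$.
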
 
\begin{prop}[Cayley's Theorem] \label{6}
Every finite group $G$ is contained in the symmetric group $S_{|G|}$. Specifically, $D_{2n}, A_n\leq S_n$.
\end{prop}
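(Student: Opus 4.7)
The plan is to prove the three claims in sequence by constructing explicit embeddings into symmetric groups. For Cayley's theorem itself, I would use the left regular representation. Define, for each $g\in G$, the map $\lambda_g : G\to G$ sending $x\mapsto gx$. Because $G$ is a group, $\lambda_g$ has two-sided inverse $\lambda_{g^{-1}}$, so $\lambda_g$ is a bijection of the underlying set of $G$, i.e., an element of the symmetric group $\mathrm{Sym}(G)$ on $|G|$ letters. I would then check that $g\mapsto \lambda_g$ is a group homomorphism $\Phi\colon G\to \mathrm{Sym}(G)$ by verifying $\lambda_{gh}(x)=(gh)x=g(hx)=\lambda_g(\lambda_h(x))$, and that it is injective because $\lambda_g=\mathrm{id}$ forces $g\cdot 1=1$, so $g=1$. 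After choosing any bijection of $G$ with $\{1,\dots,|G|\}$ to identify $\mathrm{Sym}(G)$ with $S_{|G|}$, we obtain $G\hookrightarrow S_{|G|}$.

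The two specific embeddings in the second sentence are easier and I would handle them separately. For $A_n\leq S_n$, there is essentially nothing to prove: the alternating group is by definition the kernel of the sign homomorphism $\mathrm{sgn}\colon S_n\to\{\pm 1\}$, hence a subgroup of $S_n$. For $D_{2n}\leq S_n$ (using the convention that $D_{2n}$ has order $2n$), I would use the geometric realization of $D_{2n}$ as the full symmetry group of a regular $n$-gon: labelling the vertices $1,2,\dots,n$, each symmetry of the polygon permutes the vertex set, so restriction to vertices gives a map $D_{2n}\to S_n$. This map is injective because a rigid motion of the plane is determined by its action on three non-collinear points, and in particular by its action on the vertices of the polygon.

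I do not expect a genuine obstacle here; both results are completely standard and the author has already signalled that they will be quoted (the statement cites Dummit and Foote). The only mildly subtle point is being honest about the identification $\mathrm{Sym}(G)\cong S_{|G|}$, which depends on an arbitrary ordering of the elements of $G$, and noting that the dihedral embedding produces an image strictly smaller than $S_n$ for $n\geq 4$ (so $D_{2n}$ is a \emph{proper} subgroup, which is all we need). Since the proposition is stated as a black-box input to the explicit conjugacy-class computations in the next section, a short paragraph combining the regular representation with the two geometric/algebraic observations above should suffice.
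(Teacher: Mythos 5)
Your proof is correct and is exactly the standard argument: the paper itself gives no proof of this proposition, quoting it as a black-box from Dummit and Foote, and the cited source's proof is the same left regular representation argument (plus the routine observations for $A_n$ and $D_{2n}$) that you give. Nothing is missing; your remark about the arbitrary identification $\mathrm{Sym}(G)\cong S_{|G|}$ is a fair point of care but not an issue for how the proposition is used later.
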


We start by considering \textit{Dihedral Groups} of $2n$ elements, $D_{2n}$. The author was introduced to these results by Professor B. Sury.

\begin{prop}\label{7}
Let $G=D_{2n}$ where $n\geq 3$. Then $\cp(G)$ is $\frac{n+6}{4n}$ if $n$ is even and $\frac{n+3}{4n}$ if $n$ is odd.
\end{prop}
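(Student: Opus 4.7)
The plan is to apply Theorem \ref{4}, which reduces the computation of $\cp(G)$ to counting the conjugacy classes of $G=D_{2n}$. So the bulk of the work is a careful enumeration of conjugacy classes in each parity case for $n$.

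First I would fix the standard presentation $D_{2n}=\langle r,s\mid r^n=s^2=1,\; srs^{-1}=r^{-1}\rangle$, so that the $2n$ elements split as the $n$ rotations $\{r^k : 0\le k\le n-1\}$ and the $n$ reflections $\{r^k s : 0\le k\le n-1\}$. Then I would work out the three conjugation rules I need: $sr^k s^{-1}=r^{-k}$, $r\cdot(r^j s)\cdot r^{-1}=r^{j+2}s$, and $s\cdot(r^j s)\cdot s^{-1}=r^{-j}s$. These are immediate from the relation $srs^{-1}=r^{-1}$, but they are the engine driving everything else, so I would state them cleanly upfront.

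Next I would case-split on the parity of $n$. For $n$ odd, the rotations split into the singleton $\{1\}$ and the pairs $\{r^k,r^{-k}\}$ for $1\le k\le(n-1)/2$, giving $(n+1)/2$ classes; and since conjugation by $r$ translates $r^j s\mapsto r^{j+2}s$ and $2$ generates $\mathbb{Z}/n$ when $n$ is odd, all $n$ reflections collapse into a single class, contributing $1$ more. Thus $K=(n+3)/2$, and Theorem \ref{4} yields $\cp(G)=\frac{n+3}{4n}$. For $n$ even, the rotations split into the singletons $\{1\}$ and $\{r^{n/2}\}$ together with the pairs $\{r^k,r^{-k}\}$ for $1\le k\le n/2-1$, giving $n/2+1$ classes; meanwhile $2$ has image a proper subgroup of $\mathbb{Z}/n$, so the reflections split by parity of the exponent into two orbits under conjugation by $\langle r\rangle$, and conjugation by $s$ preserves this parity, giving exactly $2$ reflection classes. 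Thus $K=n/2+3$, and Theorem \ref{4} yields $\cp(G)=\frac{n+6}{4n}$.

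The only real obstacle is the even case: one must verify that the two reflection orbits produced by conjugation by $r$ do not merge further under conjugation by any other element, which is why I would be explicit that $s$ (and hence all reflections $r^k s$, since conjugating by $r^k s$ can be decomposed) preserves the parity of the exponent $j$ in $r^j s$. Once this is noted, each case is a short bookkeeping exercise and the two formulas drop out.
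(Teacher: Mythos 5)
Your proof is correct, but it takes a different route from the paper. The paper works with centralisers: using Proposition \ref{3} it records $Z_G(1)=G$, $Z_G(x^p)=\langle x\rangle$, and $Z_G(x^py)=\{1,x^py\}$ (with the extra central element $x^{n/2}$ enlarging the reflection centralisers in the even case), and then sums $|Z_G(g)|$ over $G$ and divides by $|G|^2$; it writes out only the odd case and leaves the even case as ``similar.'' You instead count conjugacy classes directly and invoke Theorem \ref{4}, which requires the explicit conjugation rules and, in the even case, the verification that the two reflection orbits under conjugation by $\langle r\rangle$ do not merge under conjugation by reflections --- a point you handle correctly via parity of the exponent. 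The two approaches are dual to one another through the relation $|Z_G(g)|\,|\cC_g|=|G|$, so neither is more powerful, but they distribute the bookkeeping differently: the centraliser route only needs the (easily guessed) centralisers of the three element types and a single sum, whereas your route yields the full class structure of $D_{2n}$ as a by-product ($(n+3)/2$ classes for $n$ odd, $n/2+3$ for $n$ even), which is arguably more informative and also treats both parities in full rather than deferring the even case. Your class counts check out: for $n$ odd, $1+\frac{n-1}{2}+1=\frac{n+3}{2}$ gives $\cp(G)=\frac{n+3}{4n}$, and for $n$ even, $2+\left(\frac{n}{2}-1\right)+2=\frac{n}{2}+3$ gives $\cp(G)=\frac{n+6}{4n}$, matching the statement.
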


 \commentvspace{-10pt}
\begin{proof}
We just prove for the case when $n$ is odd. The even case is similar. Let $D_{2n}=<x, y | x^n=1, y^2=1, (xy)^2=1>$. Then observe that for each $1\leq p \leq n-1$, $Z_G(1)=G$, $Z_G(x^p)=<x>$, $Z_G(y)=\{1,y\}$ and $Z_G(x^py)=\{1,x^py\}$. Hence, by Proposition \ref{3}, we have 
\[\cp(G)=\frac{\sum_{x\in G}|Z_G(x)|}{|G|^2}=\frac{1\cdot2n+(n-1)\cdot n + n\cdot 2}{4n^2}=\frac{n+3}{4n}.\]\end{proof}

It is clear that the sequence of probabilities $\cp(D_{2n})$, $n\geq 3$, has alternate crests and troughs and converges to $0.25$. Furthermore, $\cp(D_{2n}) \leq 0.5$ for each $n\neq 4$.

\begin{remark}\label{r2} 
Beyond $n=3$, we get that $D_{2n}$ is non-abelian. A small checking would show that $\cp(D_{2n}) \leq \frac{5}{8}$ for each $n\geq 3$ with equality only for $n=4$. In fact, for any non-abelian group $G$ discussed so far, we had $\cp(G)\le \frac{5}{8}$. We shall soon see that this is indeed a global upper bound.
\end{remark}

We shift our focus to $S_n$. A \textit{partition} of a natural number $n$ is an unordered collection of natural numbers $a_1,\ldots,a_l$ which add up to $n$. $p(n)$ would denote the number of partitions. For example, as $4=4=1+3=2+2=1+1+2=1+1+1+1$ we would have $p(4)=5$. Observe that number of conjugacy classes of $S_n$ = the number of cycle types in $S_n$ = the number of \textit{partitions} of $n$. So we have -
\commentvspace{-6pt}

\begin{prop} \label{8} 
The number of conjugacy classes of $S_n$ is equal to $p(n)$, the number of partitions of $n$. Therefore, $\cp(S_n) = \frac{p(n)}{n!}$.
\end{prop}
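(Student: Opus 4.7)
The plan is to chain Proposition \ref{5} with Theorem \ref{4} by identifying cycle types in $S_n$ with integer partitions of $n$. By Proposition \ref{5}, two elements of $S_n$ are conjugate if and only if they have the same cycle type, so the class number $K$ of $S_n$ is simply the number of distinct cycle types that appear. Once I establish that this number equals $p(n)$, Theorem \ref{4} gives $\cp(S_n) = K/|S_n| = p(n)/n!$ immediately.

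To set up the bijection, I would invoke the standard disjoint cycle decomposition: every $\sigma \in S_n$ factors uniquely, up to the order of the factors, as a product of disjoint cycles, where I insist on recording each fixed point as a $1$-cycle. The unordered multiset of cycle lengths $(a_1, \ldots, a_l)$ then satisfies $a_1 + a_2 + \cdots + a_l = n$ since the cycles partition the underlying set $\{1, \ldots, n\}$. This multiset is precisely the cycle type of $\sigma$, and by definition it is a partition of $n$. Conversely, given any partition $n = a_1 + \cdots + a_l$, the explicit permutation $(1,\,2,\,\ldots,\,a_1)(a_1+1,\,\ldots,\,a_1+a_2)\cdots$ has exactly that cycle type, so every partition is realised. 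Cycle types therefore stand in bijection with partitions of $n$, yielding $K = p(n)$.

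There is no real obstacle here; the argument is essentially bookkeeping supported by the two results already cited. The only convention worth flagging is the treatment of fixed points as $1$-cycles: without it, the cycle-length multiset would add up to $n$ minus the number of fixed points rather than to $n$ itself, and the bijection with $p(n)$ would break. With that convention in place, a direct application of Theorem \ref{4} closes the proof.
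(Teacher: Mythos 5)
Your proposal is correct and follows exactly the route the paper takes: conjugacy classes of $S_n$ correspond to cycle types by Proposition \ref{5}, cycle types correspond to partitions of $n$, and Theorem \ref{4} then gives $\cp(S_n)=p(n)/n!$. The paper states this as an observation without detail, so your write-up simply supplies the bookkeeping (including the useful convention of counting fixed points as $1$-cycles) for the same argument.
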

\commentvspace{-8pt}

\begin{remark}\label{r3} 
The first few values of commuting probability of $S_n$ - for $n=3,4,5,6$ they are respectively $0.5, \frac{5}{24}, \frac{7}{120}, \frac{11}{720}$. As can be seen, this decreases rapidly.
\end{remark}

Even though there are many known approximations and neat series which asymptotically converge to $p(n)$ (check for example \cite{ram18} for more details), there is no known closed formula for this function. Here is a well-known simple upper bound. 

\begin{prop}
\label{9}
For any natural number $n$, $p(n)\le 2^{n-1}$.
\end{prop}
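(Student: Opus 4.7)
The plan is to bound $p(n)$ by comparing partitions with a slightly larger, easier-to-count family, namely the \emph{compositions} of $n$. By a composition of $n$ I mean an ordered tuple $(a_1, a_2, \ldots, a_l)$ of positive integers with $a_1 + a_2 + \cdots + a_l = n$. Since rearranging the parts of a composition does not change the underlying partition, every partition of $n$ is represented by at least one composition, so $p(n)$ is at most the number of compositions of $n$.

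The main step is therefore to show that there are exactly $2^{n-1}$ compositions of $n$. I would do this by exhibiting a bijection between compositions of $n$ and subsets of $\{1, 2, \ldots, n-1\}$. Send a composition $(a_1, \ldots, a_l)$ to its set of partial sums
\[ \{a_1,\ a_1+a_2,\ \ldots,\ a_1+a_2+\cdots+a_{l-1}\}. \]
Since each $a_i \geq 1$, these partial sums are strictly increasing and contained in $\{1, \ldots, n-1\}$, and the inverse map sends a subset $\{s_1 < s_2 < \cdots < s_{l-1}\}$ to the composition $(s_1,\ s_2 - s_1,\ \ldots,\ s_{l-1} - s_{l-2},\ n - s_{l-1})$, which consists of positive integers summing to $n$. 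These two maps are clearly inverse to each other, so the number of compositions of $n$ equals $2^{n-1}$, and hence $p(n) \leq 2^{n-1}$.

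There is no real obstacle in this argument — it is a standard stars-and-bars count. The only point requiring care is to cleanly separate the notion of a composition (ordered) from that of a partition (unordered), which is what makes the inequality direction transparent and also explains, as a byproduct, why the bound is typically quite far from tight: each partition with distinct reorderings contributes multiple compositions, all of which map to distinct subsets of $\{1, \ldots, n-1\}$.
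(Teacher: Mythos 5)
Your proof is correct and follows essentially the same route as the paper: both bound $p(n)$ by the number of ordered solutions (compositions) of $x_1+\cdots+x_k=n$ with $x_i\geq 1$, which total $2^{n-1}$. You simply make the counting step explicit via the partial-sum bijection with subsets of $\{1,\ldots,n-1\}$, where the paper just quotes the stars-and-bars count.
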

\commentvspace{-10pt}
\begin{proof}
 Any partition is a solution to the equation $x_1+....+x_k=n$ with each $x_i\geq 1$ for some $k=1,...,n$. Total number of solutions of such equations is $2^{n-1}$. Hence, $p(n)\le 2^{n-1}$. 
\end{proof}
\commentvspace{-6pt}

As $n$ becomes larger, $\cp(S_n)$ goes to zero. Hence, commuting probability has no non-trivial global lower bound.

We conclude this section by analysing the alternate group $A_n$. We give a formula using different types of partitions.

\begin{definition}
Let $n\in \N$. An \textit{odd distinct partition} (ODP) of $n$ is a partition of $n$ consisting of odd and distinct parts. The corresponding cycle type is called an \textit{odd distinct cycle type} (ODC).
\end{definition}

It is helpful to recall that the sign of a permutation  is only dependent on its cycle type. Here is a well-known result characterizing the conjugacy classes of $A_n$. For example, one might refer to \cite{planmathconj}. 

\begin{theorem}
\label{10}
A conjugacy class $\cC$ of $S_n$ with cycle type $t$ of even permutations remains unchanged in $A_n$ if and only if there is an odd permutation $p$ and a permutation $x\in \cC$ such that $xp=px$. Moreover, this happens if and only if $t$ is not ODC. If $t$ is ODC, then $\cC$ splits into two identical parts.
\end{theorem}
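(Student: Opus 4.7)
The plan is to first translate the question into one about centralizers using orbit-stabilizer, and then analyze $Z_{S_n}(x)$ structurally to detect odd elements via the cycle type. Fixing $x \in \cC$, orbit-stabilizer gives $|\cC| = [S_n : Z_{S_n}(x)]$, while the $A_n$-conjugacy class of $x$ has size $[A_n : Z_{A_n}(x)]$, where $Z_{A_n}(x) = Z_{S_n}(x) \cap A_n$. Since $[S_n : A_n] = 2$, the index $[Z_{S_n}(x) : Z_{A_n}(x)]$ is $1$ or $2$, and is $2$ precisely when $Z_{S_n}(x)$ contains an odd permutation. A short computation then shows that the $A_n$-class of $x$ has the same size as $\cC$ iff this index is $2$, in which case $\cC$ is unchanged; otherwise the $A_n$-class is half the size of $\cC$. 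Since $\cC$ is closed under $S_n$-conjugation and $A_n \nm S_n$, the set $\cC$ is a union of $A_n$-classes of equal size (they are obtained from one another by conjugation by any fixed odd permutation). Hence, in the second scenario, $\cC$ splits into exactly two equal halves, yielding the first ``iff'' and the ``two identical parts'' claim.

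It remains to show that $Z_{S_n}(x)$ has an odd element iff $t$ is not ODC. For the direction ``$t$ ODC implies $Z_{S_n}(x) \subseteq A_n$'', note that the cycles of $x$ have pairwise distinct odd lengths. Any $\sigma \in Z_{S_n}(x)$ sends each cycle of $x$ to one of the same length, and distinctness forces $\sigma$ to fix every cycle setwise. Its restriction to the support of a given $c$-cycle must then commute with that cycle, so it is a power of it, which decomposes as a disjoint product of cycles whose lengths divide the odd $c$ and are thus odd; any such permutation is even. Multiplying the restrictions over all cycles, $\sigma$ is even, so $Z_{S_n}(x) \subseteq A_n$.

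For the converse, suppose $t$ is not ODC. Then either some cycle of $x$ has even length $c$, or all cycle lengths are odd but some length $c$ repeats. In the first case, the $c$-cycle itself is a component of $x$, commutes with $x$ (disjoint cycles commute), and has sign $(-1)^{c-1} = -1$. In the second, picking two distinct length-$c$ cycles $(a_1 \cdots a_c)$ and $(b_1 \cdots b_c)$ of $x$, the involution $\tau = (a_1\,b_1)(a_2\,b_2) \cdots (a_c\,b_c)$ swaps these two cycles under conjugation and acts trivially on every other cycle of $x$, so $\tau x \tau^{-1} = x$; but $\tau$ is a product of $c$ transpositions with $c$ odd, hence $\tau$ is odd.

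I expect the main obstacle to lie in the ODC centralizer analysis: one must carefully argue that no ``extra'' odd permutations can sneak into $Z_{S_n}(x)$ beyond what the individual cycles allow. This ultimately hinges on the distinctness of cycle lengths forcing $\sigma$ to stabilize each cycle individually, since without that distinctness one could permute cycles of equal odd length among themselves exactly as in case (b) above, producing odd centralizing elements.
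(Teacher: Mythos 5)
Your proposal is correct and takes essentially the same route as the paper: the index-two centralizer argument reduces everything to whether $Z_{S_n}(x)$ contains an odd permutation, and in the non-ODC case you exhibit exactly the same odd commuting elements (an even cycle of $x$, or the product of transpositions swapping two equal odd cycles). The only variation is in the ODC direction, where you argue element-wise that any centralizing permutation stabilizes each cycle and restricts to a power of it (hence is even), whereas the paper counts $|Z_{S_n}(x)| = n_1\cdots n_k$ and identifies the centralizer with the subgroup generated by the cycles of $x$; this is a cosmetic difference, not a different method.
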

\commentvspace{-5pt}
\begin{proof}

Suppose $x\in A_n$. Let $\cC_x$ and $\cC_x$ denote its conjugacy class in $S_n$. By Proposition \ref{5}, we have $\cC'_x \subseteq \cC_x$. So $[A_n : Z_{A_n}(x)] \leq [S_n : Z_{S_n}(x)]$. Moreover, $Z_{A_n}(x) \leq Z_{S_n}(x)$. Thus, $[Z_{S_n}(x) : Z_{A_n(x)}] \leq 2$ with equality if and only if $\cC'_x = \cC_x$. So $\cC'_x=\cC_x$ if and only if $Z_{A_n}(x) < Z_{S_n}$ which is true if and only if there is an odd permutation $p$ commuting with $x$. Otherwise, it will split into exactly two equally sized classes in $A_n$. 

We now wish to see how this relates to ODC. Suppose $x\in \cC$ is not ODC. Then either $x$ has an even cycle or two identical odd cycles. In the first case, this even cycle, call it $p$, isan odd permutation in the centraliser. In the other case, if the two cycles of the same size are $(a_1,\ldots, a_k)$ and $(b_1,\ldots, b_k)$, take $p=(a_1,b_1)\ldots (a_k,b_k)$. Clearly, $p$ is odd and in $Z_{S_n}(x)$.

Conversely, if $x$ is ODC, then let us denote its cycle decomposition (including singleton, if any) by $C_1C_2\ldots C_k$ where $n_i=|C_i|$ and $n_1<n_2<\ldots n_k$. Then clearly $|\cC_x|=\frac{n!}{n_1n_2\ldots n_k}$. Thus $|Z_{S_n}(x)|=n_1 n_2 \ldots n_k$. Now consider the subgroup $H=<C_1,\ldots, C_k>$. Then $|H|=n_1n_2\ldots n_l$ and $H\le Z_{S_n}(x)$. So $H=Z_{S_n}(x)$. But $H\le A_n$. Thus, $Z_{A_n}(x)=Z_{S_n}(x)$ and $\cC_x$ splits in $A_n$.
\end{proof}

Let $q(n)$ denote the number of ODPs of $n$. Let $r(n)$ and $s(n)$ respectively denote the number of partitions of $n$ with even many even parts and odd many even parts. Then using formal power series manipulations, one can directly show that $r(n)-s(n)=q(n)$ and $r(n)+s(n)=p(n)$ for each $n\geq 1$. So, to summarise, we have the following result.

\begin{cor}
\label{10.1}
Let $G=A_n$, then $\cp(A_n)=\frac{2(r(n)+q(n))}{n!}=\frac{p(n)+3q(n)}{n!}$.
\end{cor}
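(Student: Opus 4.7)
The plan is to apply Theorem \ref{4} to $A_n$ and carefully count its conjugacy classes using Theorem \ref{10}. Since $|A_n|=n!/2$, we have $\cp(A_n)=2K/n!$ where $K$ denotes the class number of $A_n$, so the whole task reduces to showing $K=r(n)+q(n)$.

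First I would identify which cycle types in $S_n$ correspond to even permutations. A $k$-cycle has sign $(-1)^{k-1}$, so it is even iff $k$ is odd. Hence a permutation is even iff the number of even-length cycles in its decomposition is even, and thus the even-permutation cycle types of $S_n$ are in bijection with partitions of $n$ having an even number of even parts, i.e.\ $r(n)$ many. In particular, every ODC is an even cycle type (an ODP has zero even parts), so the $q(n)$ many ODCs sit inside these $r(n)$ even-permutation classes of $S_n$.

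Next I would invoke Theorem \ref{10}: each of the $r(n)-q(n)$ non-ODC even classes of $S_n$ remains a single conjugacy class in $A_n$, while each of the $q(n)$ ODC classes of $S_n$ splits into two equal classes in $A_n$. Adding these contributions gives
\[K=(r(n)-q(n))+2q(n)=r(n)+q(n),\]
and multiplying by $2/n!$ yields the first equality $\cp(A_n)=\frac{2(r(n)+q(n))}{n!}$.

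For the second equality I would simply use the stated identities $r(n)+s(n)=p(n)$ and $r(n)-s(n)=q(n)$ to get $2r(n)=p(n)+q(n)$, so that
\[2(r(n)+q(n))=2r(n)+2q(n)=p(n)+3q(n).\]
There is not really a hard step here: the main thing to watch is the observation that ODCs are automatically even cycle types (so the splitting in Theorem \ref{10} happens entirely within the $r(n)$ pool), and the rest is bookkeeping plus the two power-series identities $r(n)\pm s(n)=p(n),q(n)$ which the excerpt already grants.
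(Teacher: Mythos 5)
Your proposal is correct and follows exactly the route the paper intends (the paper leaves the count implicit after Theorem \ref{10}): apply Theorem \ref{4} with $|A_n|=n!/2$, count $K=(r(n)-q(n))+2q(n)=r(n)+q(n)$ via the splitting criterion, and convert using $2r(n)=p(n)+q(n)$. Your explicit observation that ODCs are automatically even cycle types is a worthwhile detail the paper glosses over, but the argument is the same.
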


\begin{eg}\label{ealternating}
For $n=4$, $r(4)=3$ and $q(4)=1$. So commuting probability is $\frac{1}{3}$. Likewise, for $n=5,6$ we get the probabilities are $\frac{1}{12}$ and $\frac{7}{360}$.
\end{eg}

\section{Bounding the Commuting Probability}
\label{sec3}
In this section, we shall be computing some global and local bounds on commuting probability. We start by recalling a few basic results [See, for eg. \cite[p.~84-89]{dum03}] from group theory. 

\begin{prop}\label{11}
Let $G, H$ be groups and $Z(G)$ be the center of $G$.
\begin{enumerate}
    \item $G/Z(G)$ is cyclic if and only if $G=Z(G)$.
    \item For any $(a,b)\in G\times H$, $Z_{G\times H}(a,b) = Z_G(a)\times Z_H(b)$.
\end{enumerate} 
\end{prop}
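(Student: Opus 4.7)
Both statements in this proposition are classical facts of elementary group theory, and I would prove the two parts independently.

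For part (1), the ``if'' direction is immediate: if $G = Z(G)$, then $G/Z(G)$ is the trivial group, which is cyclic. For the ``only if'' direction, my plan is to assume $G/Z(G)$ is generated by a single coset $g Z(G)$ and then deduce that $G$ itself is abelian (which forces $Z(G) = G$). The key observation is that every element of $G$ can be written in the form $g^k z$ with $k \in \Z$ and $z \in Z(G)$, simply by reading off the coset it lies in. Taking two such elements $x = g^m z_1$ and $y = g^n z_2$, I would multiply them in both orders and use the centrality of $z_1, z_2$ to slide them past powers of $g$; both products collapse to $g^{m+n} z_1 z_2$, so $xy = yx$. The only subtlety worth flagging is that ``central'' means commuting with \emph{every} element of $G$, not merely with the chosen generator $g$; beyond this there is no real obstacle.

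For part (2), the plan is simply to unfold the definition of the centraliser together with the componentwise multiplication in $G \times H$. A pair $(x,y)$ belongs to $Z_{G \times H}(a,b)$ precisely when $(xa, yb) = (ax, by)$, which splits into the two coordinate conditions $xa = ax$ and $yb = by$; this is exactly the statement that $x \in Z_G(a)$ and $y \in Z_H(b)$. Both inclusions in the required set equality come from this single bi-implication, and there is no obstacle.
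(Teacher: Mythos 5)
Your proof is correct: the paper itself gives no proof of this proposition, simply citing it as a standard fact from Dummit and Foote, and your argument is exactly the standard one that such a reference would supply. Writing each element as $g^k z$ with $z \in Z(G)$ and sliding the central factors past powers of $g$ is the textbook proof of part (1), and the componentwise unfolding in part (2) is likewise the expected argument, so there is nothing to add.
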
 
\commentvspace{-6pt}
An immediate consequence of the above is the following. One may use it and the groups $G_n = S_3\times S_3 \ldots S_3$ ($n$ times) and give an alternate proof of the fact that commuting probability has no lower bound. 

\begin{prop}\label{12} 
If $G$ and $H$ are two finite non-abelian groups. Then $\cp({G\times H}) = \cp(G) \times \cp(H)$. 
\end{prop}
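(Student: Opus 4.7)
The plan is to reduce the statement to a direct computation using the two tools supplied immediately before it, namely Proposition \ref{3} (the sum-of-centralisers formula for $|L(\cdot)|$) and part~(2) of Proposition \ref{11} (centralisers in a direct product factor as products of centralisers).

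First I would unwind the left-hand side with Proposition \ref{3}, writing
\[
|L(G\times H)| \;=\; \sum_{(a,b)\in G\times H} |Z_{G\times H}(a,b)|.
\]
Then I would apply Proposition \ref{11}(2) to rewrite $|Z_{G\times H}(a,b)| = |Z_G(a)|\cdot|Z_H(b)|$. At this point the double sum separates, and another use of Proposition \ref{3} identifies each factor as $|L(G)|$ and $|L(H)|$ respectively. Dividing by $|G\times H|^2 = |G|^2|H|^2$ yields $\cp(G\times H)=\cp(G)\cp(H)$.

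There is essentially no obstacle here; the only thing to be slightly careful about is that the sum really does factor cleanly, which it does because the index set $G\times H$ is itself a Cartesian product and the summand splits multiplicatively thanks to Proposition \ref{11}(2). I would also note, as a sanity check, that the hypothesis ``non-abelian'' is not actually used in the argument; the identity $\cp(G\times H)=\cp(G)\cp(H)$ holds for arbitrary finite groups. The hypothesis is presumably included only because this proposition is being invoked in a context where both factors need to be non-abelian (for example, to iterate on $S_3\times\cdots\times S_3$ and drive $\cp$ to zero, as the surrounding remark suggests).

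As an alternative one-line proof, I might instead invoke Theorem \ref{4}: the conjugacy classes of $G\times H$ are exactly the products $\cC\times\cC'$ with $\cC$ a conjugacy class of $G$ and $\cC'$ one of $H$, so if $K_G$ and $K_H$ denote the class numbers then $K_{G\times H}=K_GK_H$, whence $\cp(G\times H) = \tfrac{K_GK_H}{|G||H|} = \cp(G)\cp(H)$. I would probably mention this second route as a remark, but present the centraliser-sum version as the main proof since Proposition \ref{11} is the result just cited.
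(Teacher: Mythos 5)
Your proof is correct and matches the paper's intended argument: the paper gives no explicit proof but states the result as ``an immediate consequence'' of Proposition \ref{11}, which is exactly your centraliser-factorisation route via Proposition \ref{3}. Your observation that the non-abelian hypothesis is not needed is also accurate.
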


Earlier on, we observed that for small non-abelian groups $\cp(G)\leq \frac{5}{8}$. We are derive the famous Erd\"os 5-8 Theorem, which confirms our observations, and give a group theoretic corollary.

\begin{theorem}[Erdős 5-8 Theorem, see for eg. {\cite[p.~1032]{gust73}}] 
\label{13}
Let $G$ be a finite non-abelian group. Then $\cp(G)\leq \frac{5}{8}$. Moreover, equality holds for infinitely many groups.
\end{theorem}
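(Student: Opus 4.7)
The plan is to combine the centraliser formula of Proposition \ref{3} with a standard fact about $G/Z(G)$ for non-abelian $G$. First, write
\[|L(G)| = \sum_{x\in G} |Z_G(x)| = \sum_{x\in Z(G)} |Z_G(x)| + \sum_{x\notin Z(G)} |Z_G(x)|.\]
Every central $x$ contributes $|G|$, giving $|Z(G)|\cdot|G|$ for the first sum. For $x\notin Z(G)$, the centraliser $Z_G(x)$ is a proper subgroup of $G$, so by Lagrange $[G:Z_G(x)]\geq 2$, i.e.\ $|Z_G(x)|\leq |G|/2$. Hence the second sum is at most $\frac{|G|}{2}(|G|-|Z(G)|)$, and dividing by $|G|^2$ yields the bound
\[\cp(G) \;\leq\; \frac{1}{2} + \frac{|Z(G)|}{2|G|}.\]

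The next step is to control $|Z(G)|/|G|$. Since $G$ is non-abelian, Proposition \ref{11}(1) tells us $G/Z(G)$ is not cyclic; in particular its order cannot be $2$ or $3$ (which would force cyclicity, being prime). Therefore $[G:Z(G)]\geq 4$, so $|Z(G)|/|G|\leq 1/4$, and substituting gives $\cp(G)\leq \frac{1}{2}+\frac{1}{8}=\frac{5}{8}$, as required.

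For the second claim, I would exhibit an infinite family attaining equality. The tightness conditions above are that $[G:Z(G)]=4$ and that every non-central centraliser has index exactly $2$. The group $D_8$ satisfies both (direct check, or use Proposition \ref{7} with $n=4$: $\cp(D_8)=\frac{10}{16}=\frac{5}{8}$); alternatively $Q_8$ works by the earlier example. Now invoke Proposition \ref{12}: for any finite abelian $A$, $\cp(D_8\times A)=\cp(D_8)\cdot\cp(A)=\frac{5}{8}\cdot 1=\frac{5}{8}$, and $D_8\times A$ is still non-abelian. Varying $A$ over an infinite family (e.g.\ $A=(\Z/2)^k$ for $k\geq 0$) produces infinitely many non-abelian groups with commuting probability exactly $5/8$.

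I do not expect a real obstacle: the argument is essentially a one-line Lagrange bound plus the classical $G/Z(G)$ observation. The only subtle point is justifying $[G:Z(G)]\neq 2,3$, which is immediate from the fact that cyclicity of $G/Z(G)$ forces $G$ abelian. The examples for equality are standard and the product construction with Proposition \ref{12} automatically yields an infinite family.
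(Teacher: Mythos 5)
Your proposal is correct and follows essentially the same route as the paper: the centraliser-sum decomposition from Proposition \ref{3}, the bound $\cp(G)\leq \frac{1}{2}+\frac{|Z(G)|}{2|G|}$ combined with $[G:Z(G)]\geq 4$ from Proposition \ref{11}, and an infinite equality family of the form (abelian) $\times$ ($Q_8$ or $D_8$). The only cosmetic remark is that Proposition \ref{12} is stated in the paper for two non-abelian factors, so when one factor is abelian you are really using the general multiplicativity coming from Proposition \ref{11}(2) --- exactly as the paper does implicitly with $H\times Q_8$.
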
 

\commentvspace{-10pt}
\begin{proof}
Let $G$ be a non-abelian group. Then by Proposition \ref{11}, $[G:Z(G)]\geq 4$. Moreover, if $a\notin Z(G)$, then $[G:Z_G(a)]\geq 2$. So by Proposition \ref{3}, we get -
\begin{align*}
\cp(G) = \sum_{g\in G} \frac{|Z_G(g)|}{|G|^2} 
	  &= \sum_{g\in Z(G)} \frac{|Z_G(g)|}{|G|^2} + \sum_{g\in G\setminus Z(G)} \frac{|Z_G(g)|}{|G|^2} \\
	  &\leq \sum_{g\in Z(G)} \frac{|G|}{|G|^2} + \sum_{g\in G\setminus Z(G)} \frac{1}{2|G|} \\
	  &= \frac{|Z(G)|}{|G|} + \frac{|G|-|Z(G)|}{2|G|} \\
	  &= \frac{1}{2} + \frac{|Z(G)|}{2|G|} \leq \frac{5}{8} 
\end{align*}
Finally, observe that for any abelian group $H$, $\cp({H\times Q_8})$ is indeed $5/8$. This concludes the proof.\end{proof} 
\commentvspace{-10pt}

There are several interesting applications of the 5-8 theorem, for example, one can bound the number of order $2$ elements in a non-abelian group $G$. A proof would require some character theory. Interested reader are referred to Corollary $3.1$ and Lemma $2$ of \cite{man94}.

\begin{cor} 
\label{13.1}
Any non abelian finite group $G$ has at most $\left\lfloor\frac{5|G|}{8}\right\rfloor$ conjugacy classes, where $\lfloor . \rfloor$ is the floor function. 
\end{cor}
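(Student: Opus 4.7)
The plan is to combine the two main results of the section just stated, namely Theorem \ref{4} and Theorem \ref{13}, which together essentially collapse the corollary to one line. By Theorem \ref{4}, if $K$ denotes the class number of $G$, then $\cp(G) = K/|G|$, so bounding $K$ from above is the same thing as bounding $\cp(G)$ from above. Since $G$ is assumed non-abelian, the Erd\H{o}s 5-8 Theorem gives $\cp(G) \le 5/8$, i.e.\ $K/|G| \le 5/8$, so $K \le 5|G|/8$.

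The only remaining point is to turn this into the floor bound. Since $K$ is a non-negative integer while $5|G|/8$ need not be an integer (for instance whenever $|G|$ is not a multiple of $8$), the inequality $K \le 5|G|/8$ together with $K \in \mathbb{Z}$ automatically yields $K \le \lfloor 5|G|/8\rfloor$.

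I do not anticipate any obstacle here; the entire content of the corollary is packaged inside Theorems \ref{4} and \ref{13}, and the role of the corollary is essentially to record the group-theoretic (non-probabilistic) reformulation of the 5-8 theorem. If one wanted to emphasise the structural meaning, one could additionally remark that equality $K = \lfloor 5|G|/8 \rfloor$ is achieved in the same families as equality in Theorem \ref{13}, namely products of the form $H \times Q_8$ with $H$ abelian, giving infinitely many examples attaining the bound.
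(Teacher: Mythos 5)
Your argument is correct and is exactly the (unstated) proof the paper intends: combine Theorem \ref{4} ($\cp(G)=K/|G|$) with Theorem \ref{13} ($\cp(G)\le 5/8$) and use the integrality of $K$ to pass to the floor. Your closing remark about equality for $H\times Q_8$ with $H$ abelian is also consistent with the paper's equality discussion, so there is nothing to add.
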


A natural attempt would be to categorise all groups for which equality holds in Theorem \ref{13}. Such groups are called \textit{5-8 groups}. Note that equality holds if and only if 
\begin{enumerate}[label=(\roman*)]
\item $[G:Z(G)]=4$, and
\item $[G:Z_G(y)] = 2$ for each $y\in G\setminus Z(G)$, that is every non-trivial conjugacy class has $2$ elements. 
\end{enumerate}

However, observe that $(1)$ implies $(2)$ as well as the fact $G/Z(G) \cong V_4$, the Klein $4-$group. Thus $G$ is a 5-8 group if and only if $G/Z(G)$ is isomorphic to $V_4$ which is if and only if $[G:Z(G)]=4$. A better characterization is hinted in Section $3$ of \cite{gust73}. 

Note that the bound can be slightly improved in the case when the smallest prime dividing $|G|$ is $p>2$. This is implicit in the above proof. Further local improvement is also possible. 

\begin{theorem}\label{14}
Let $G$ be a finite non-abelian group with $p$ being the smallest prime dividing $|G|$. Then 
\[\cp(G)\leq \frac{1}{p} + \frac{(p-1)}{p[G:Z(G)]} \leq \frac{p^2+p-1}{p^3}\]
All three are equal if and only if $G/Z(G) \cong \Z/p\Z \times \Z/p\Z$.
\end{theorem}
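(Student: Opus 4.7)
The plan is to mimic the proof structure of Theorem \ref{13} (the Erd\H{o}s $5$-$8$ Theorem), but replacing the numerical inputs with ones governed by the smallest prime $p$. The two ingredients that drove that proof were (a) for $a\notin Z(G)$, the centraliser index $[G:Z_G(a)]\ge 2$, and (b) for non-abelian $G$, $[G:Z(G)]\ge 4$. Both inputs get strengthened here.

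For (a), note that for any $a\in G$ the index $[G:Z_G(a)]$ divides $|G|$; since $a\notin Z(G)$ means $Z_G(a)\lneq G$, this index is at least the smallest prime divisor of $|G|$, namely $p$. For (b), since $G$ is non-abelian, $G/Z(G)$ is non-cyclic by Proposition \ref{11}(1); in particular $[G:Z(G)]$ cannot be $1$ or a prime, and all its prime divisors are $\ge p$, so $[G:Z(G)]\ge p^2$.

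With these in hand I would split the sum in Proposition \ref{3} exactly as in the proof of Theorem \ref{13}:
\begin{align*}
\cp(G) &= \sum_{g\in Z(G)} \frac{|Z_G(g)|}{|G|^2} + \sum_{g\in G\setminus Z(G)} \frac{|Z_G(g)|}{|G|^2} \\
       &\le \frac{|Z(G)|}{|G|} + \frac{|G|-|Z(G)|}{p|G|} \;=\; \frac{1}{p} + \frac{p-1}{p\,[G:Z(G)]}.
\end{align*}
Plugging in $[G:Z(G)]\ge p^2$ gives the second inequality $\cp(G)\le (p^2+p-1)/p^3$ after combining fractions.

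For the equality discussion, equality in the first bound forces $[G:Z_G(a)]=p$ for every $a\notin Z(G)$, while equality in the second forces $[G:Z(G)]=p^2$. Since $G/Z(G)$ is non-cyclic, the only option for a non-cyclic group of order $p^2$ is $\Z/p\Z\times \Z/p\Z$, giving the forward direction. Conversely, if $G/Z(G)\cong \Z/p\Z\times \Z/p\Z$ then $[G:Z(G)]=p^2$ automatically, and for $a\notin Z(G)$ the chain $Z(G)\lneq Z_G(a)\lneq G$ of subgroups whose indices multiply to $p^2$ forces $[G:Z_G(a)]=p$, so all three quantities coincide. I do not expect a serious obstacle; the only subtlety is observing that every relevant index is forced to be $\ge p$ by divisibility, which is what lets the constant $2$ of Theorem \ref{13} get replaced by $p$ cleanly.
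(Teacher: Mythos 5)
Your proposal is correct and follows essentially the same route as the paper: the paper's proof also just reruns the Theorem \ref{13} argument with $[G:Z_G(a)]\geq p$ for $a\notin Z(G)$ and $[G:Z(G)]\geq p^2$ from Proposition \ref{11}, with the equality case handled as you describe. Your write-up is in fact a bit more explicit than the paper's (notably the converse equality argument via the chain $Z(G)\lneq Z_G(a)\lneq G$), but the ideas coincide.
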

\commentvspace{-10pt} 
\begin{proof}
The proof follows from the proof of Theorem \ref{13} realising that if $a\notin Z(G)$, then $[G:Z_G(a)]=|G|/|Z_G(a)| \geq p$. For the second inequality, note that $[G:Z(G)]\geq p^2$ by Proposition \ref{11}. Equality case is similar to above discussions.
\end{proof}
\commentvspace{-10pt}

\begin{remark}\label{r4} 
A very large set of groups for which equality holds is $G\cong P\times H$ where $H$ is abelian and $P$ is a $p-$group for which $[G:Z(G)]=p^2$. In fact, any $5-8$ group is of this form. Let $a\in H$ be an element such that $(o(a), p) = 1$, then $a\in Z(G)$. Let $H:=\{a\in G : (o(a), p) = 1\}$. Then $H\leq Z(G)$ and if $S$ is a Sylow $p-$group of $G$, we get $HS = G$. Together, this implies that $G=H\times S$ and $\cp(S)=\cp(G) = \frac{5}{8}$. One can hence show that $S$ is precisely a non-abelian central extension of $\Z/p\Z \times \Z/p\Z$.
\end{remark}

In \cite[p.~202]{mac74}, it is mentioned that if $G$ is finite and $\cp(G)>\frac{1}{2}$, then $\cp(G)$ is of the form $\frac{1}{2} + \frac{1}{2^{2s+1}}$ where $s\geq 0$. A proof can be found in \cite{paul95}. This can be used to refine our bound as follows.

\begin{prop}\label{15} 
Let $G$ be a non-abelian group of order $n$. If $8$ does not divide $n$, then we have $\cp(G) < \frac{5}{8}$. More precisely, $\cp(G) \leq \frac{1}{2}$. Equality holds for infinitely many groups.
\end{prop}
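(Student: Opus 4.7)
The plan is to combine Theorem \ref{4} with the cited refinement from MacHale--Rusin that any finite $G$ with $\cp(G)>\tfrac12$ satisfies $\cp(G)=\tfrac12+\tfrac{1}{2^{2s+1}}$ for some integer $s\ge 0$. Since $G$ is assumed non-abelian, we have $\cp(G)<1$, which excludes $s=0$, so any non-abelian $G$ with $\cp(G)>\tfrac12$ actually has $\cp(G)=\tfrac12+\tfrac{1}{2^{2s+1}}$ with $s\ge 1$.

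Next I would extract a divisibility consequence. Rewrite
\[
\cp(G)=\frac{1}{2}+\frac{1}{2^{2s+1}}=\frac{2^{2s}+1}{2^{2s+1}}.
\]
By Theorem \ref{4}, $\cp(G)=K/|G|$ where $K$ is the class number of $G$, so
\[
|G|\,(2^{2s}+1)=K\cdot 2^{2s+1}.
\]
The factor $2^{2s}+1$ is odd, hence coprime to $2^{2s+1}$, forcing $2^{2s+1}\mid |G|$. For $s\ge 1$ this gives $8\mid |G|$. Taking the contrapositive yields the desired bound: if $8\nmid |G|$ then $\cp(G)\le \tfrac12$.

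For the equality clause, I would exhibit an infinite family of non-abelian groups of order not divisible by $8$ with commuting probability exactly $\tfrac12$. A natural choice is $G_A:=S_3\times A$ with $A$ any finite abelian group of odd order. Since $A$ is abelian, conjugacy classes of $G_A$ are exactly the products $C\times\{a\}$ with $C$ a class of $S_3$ and $a\in A$, so $G_A$ has $3|A|$ classes and $6|A|$ elements; Theorem \ref{4} then gives $\cp(G_A)=\tfrac12$. Because $|A|$ is odd, $|G_A|=6|A|$ is not divisible by $8$, and varying $|A|$ over odd positive integers produces infinitely many such groups.

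There is no real obstacle beyond correctly invoking the MacHale--Rusin structural result, which is cited rather than re-proved; the only thing to verify carefully is the arithmetic step from the exact form $\tfrac12+\tfrac{1}{2^{2s+1}}$ to the divisibility $8\mid |G|$, which is immediate from the coprimality of $2^{2s}+1$ and $2^{2s+1}$ combined with $\cp(G)=K/|G|$.
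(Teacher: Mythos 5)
Your proposal is correct and takes essentially the same route as the paper: both invoke the cited MacHale/Lescot result that $\cp(G)>\tfrac12$ forces $\cp(G)=\tfrac12+\tfrac{1}{2^{2s+1}}$, combine it with Theorem \ref{4} to extract the divisibility $8\mid |G|$ (the paper phrases this as $mn=2^{2k+1}K$ with $m$ odd, $k\geq 1$), take the contrapositive, and exhibit $S_3\times H$ with $H$ odd abelian for the equality cases. Your write-up merely adds detail the paper leaves implicit, namely the explicit exclusion of $s=0$ via non-abelianness and the class count verifying $\cp(S_3\times A)=\tfrac12$.
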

\commentvspace{-10pt}
\begin{proof}
Observe that, by the statement preceding the proposition, if $\cp(G) > \frac{1}{2}$, then $\cp(G) = \frac{m}{2^{2k+1}}$, where $m$ is odd and $k\geq 1$. Let $K$ be the class number of $G$. Then $\frac{K}{n} = \cp(G) = \frac{m}{2^{2k+1}}$. Thus, $mn = 2^{2k+1}K$.  As $m$ is odd and $k\geq 1$, we have $8$ divides $n$. Considering the contrapositive, if $n$ is not divisible by $8$, we thus get $\cp(G) \leq \frac{1}{2}$, as desired. Equality will hold whenever $G\cong S_3 \times H$, where $H$ is an odd abelian group. 
\end{proof}
\commentvspace{-10pt}

Perhaps some effort can be made to classify all equality cases. For example, see \cite{paul95}. As we stated earlier, greater commuting probability indicates ''lesser commuting information'', because, in a sense, abelian groups of a given order contain the least amount of information due to commutativity. So it is natural to guess that subgroups and quotients contain lesser information (due to their derived nature) than the ambient group. Indeed this is true!

In fact, a much stronger result holds (see Theorem \ref{18}). An analysis of these results can be found in \cite{gal70}. We give a proof for the weaker cases, namely quotients and subgroups.

\begin{theorem}\label{16} 
Suppose $G$ is a finite group and $H\nm G$.  Then $\cp(G)\leq \cp(G/H)$. Equality holds if and only if $[x,y]\in H$ implies $[x,y] =1$. Thus, if equality holds, then $H\leq Z(G)$. 
\end{theorem}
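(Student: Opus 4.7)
The plan is to count $|L(G)|$ by fibering the projection $G \times G \to G/H \times G/H$ over its image, exploiting the key fact that if $x' \in xH$ and $y' \in yH$, then $[x', y']$ lies in the same $H$-coset as $[x, y]$, because $G \to G/H$ is a homomorphism.

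First I would write
\[
|L(G)| = \sum_{(xH,\, yH) \in G/H \times G/H} \bigl|\{(x',y') \in xH \times yH : [x',y'] = 1\}\bigr|.
\]
If $(xH, yH) \notin L(G/H)$, then $[x,y] \notin H$, so $[x',y'] \notin H$ for every lift, and in particular no lift can satisfy $[x',y']=1$. If $(xH, yH) \in L(G/H)$, the corresponding summand is trivially bounded by $|xH \times yH| = |H|^2$. Summing gives $|L(G)| \le |L(G/H)| \cdot |H|^2$, and dividing by $|G|^2 = |G/H|^2 \cdot |H|^2$ yields $\cp(G) \le \cp(G/H)$.

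For the equality case, the bound is tight coset by coset exactly when, for every $(xH, yH) \in L(G/H)$, all $|H|^2$ lifts in $xH \times yH$ commute. Applying this to the specific lift $(x,y)$ forces $[x,y] = 1$ whenever $[x,y] \in H$. Conversely, if $[x,y] \in H \Rightarrow [x,y] = 1$, then for any $(xH, yH) \in L(G/H)$ and any lift $(x', y') \in xH \times yH$ we have $[x'H, y'H] = [xH, yH] = H$, so $[x', y'] \in H$, and the hypothesis gives $[x', y'] = 1$; every lift commutes, so equality holds.

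Finally, to deduce $H \le Z(G)$ under equality, I would pick $h \in H$ and $g \in G$ arbitrarily. Normality of $H$ gives $g^{-1} h g \in H$, so $[h, g] = h^{-1}(g^{-1} h g) \in H$; the equality condition then forces $[h, g] = 1$, showing $h \in Z(G)$. The point needing the most care is the equality analysis: one must notice that tightness in a single coset-summand forces every one of its $|H|^2$ lifts to commute, not merely the canonical representative, so that the coset-wise implication can be read off globally.
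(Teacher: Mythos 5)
Your proof is correct and is essentially the paper's own argument: both rest on the observation that $(xH,yH)$ commutes in $G/H$ exactly when $[x,y]\in H$, that each such coset pair accounts for $|H|^2$ lifts, and that $L(G)$ sits inside the set of pairs with commutator in $H$; your coset-by-coset fibering is just a more granular phrasing of the paper's count $|H|^2|L(G/H)| = |\{(x,y):[x,y]\in H\}| \geq |L(G)|$. The equality analysis and the deduction $H\leq Z(G)$ via normality are likewise the same as in the paper.
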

\commentvspace{-10pt}
\begin{proof}
Let $\bar{G}=G/H$. Note that 
\commentvspace{-8pt}
\begin{align*}
L({\bar{G}}) &= \{(xH,yH) \in \bar{G}\times \bar{G} : [xH, yH] = 1H\}\\ 
			&= \{(xH,yH) \in \bar{G}\times \bar{G} : [x, y] \in H\}.
\end{align*}
Thus, $|H|^2|L({\bar{G}})| = |\{(x,y)\in G\times G : [x,y]\in H\}|\geq |L(G)|$, from where our result follows. 

Equality holds if and only if $\{(x,y) \in G\times G : [x, y] \in H\} = L(G)$, that is to say, $[x,y]\in H \implies [x,y]=1$. Moreover, $x\in H$ and $H\nm G$ implies $[x,y]\in H$ for each $y\in G$. So $[x,y]=1$, implying that $H\leq Z(G)$. This completes our proof.
\end{proof}
\commentvspace{-10pt}

Note that $H\leq Z(G)$ is not sufficient for equality - take $G$ to $Q_8$ and $H=Z(G)$. Then $H\leq Z(G)$ and $\cp(G/H) =1$ but $\cp(G)<1$.

\begin{theorem}\label{17}
Suppose $G$ is a finite group and $H\leq G$. Then $\cp(H) \geq \cp(G)$.
\end{theorem}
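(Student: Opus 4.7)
The plan is to prove $|L(H)| \geq (|H|/|G|)^2\, |L(G)|$, which after dividing by $|H|^2$ is exactly $\cp(H) \geq \cp(G)$. The bridge will be the ``mixed'' commuting set $M := \{(x, y) \in G \times H : xy = yx\}$, which sits between $L(H)$ and $L(G)$.

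The one auxiliary ingredient I would need is a product-set bound. For any $x \in G$, the subgroups $Z_G(x)$ and $H$ of $G$ satisfy the standard identity $|Z_G(x)| \cdot |H| = |Z_G(x) H| \cdot |Z_G(x) \cap H|$, and since $Z_G(x) H \subseteq G$, this rearranges to
\[|Z_G(x) \cap H| \;\geq\; \frac{|Z_G(x)| \cdot |H|}{|G|}.\]
Note also that $Z_G(x) \cap H = Z_H(x)$ whenever $x \in H$.

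I would then carry out two parallel double-counts. Counting $|M|$ by first the second coordinate and then the first gives
\[|M| \;=\; \sum_{y \in H} |Z_G(y)| \;=\; \sum_{x \in G} |Z_G(x) \cap H|;\]
applying the auxiliary bound to every $x \in G$ on the right and invoking Proposition \ref{3} yields $|M| \geq (|H|/|G|)\,|L(G)|$. Applying the same idea once more to $L(H)$ itself,
\[|L(H)| \;=\; \sum_{y \in H} |Z_H(y)| \;=\; \sum_{y \in H} |Z_G(y) \cap H| \;\geq\; \frac{|H|}{|G|} \sum_{y \in H} |Z_G(y)| \;=\; \frac{|H|}{|G|}\,|M|.\]
Chaining the two bounds produces $|L(H)| \geq (|H|/|G|)^2\, |L(G)|$, and the conclusion follows at once.

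The main obstacle is spotting the mediating set $M$ and noticing that the same product-set inequality performs both bridging steps, one ``outer'' (from $L(G)$ down to $M$) and one ``inner'' (from $M$ down to $L(H)$). Once $M$ is in play, both halves of the argument reduce to routine double counting together with Proposition \ref{3}; no appeal to conjugacy classes or Burnside-type counting is needed.
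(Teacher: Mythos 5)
Your proposal is correct and is essentially the paper's own argument: the mixed set $M$ you introduce is exactly the set $\{(g,h) : g\in G,\ h\in H,\ gh=hg\}$ that the paper double counts, and the product-set bound $|Z_G(x)\cap H|\geq |Z_G(x)||H|/|G|$ applied twice matches the paper's two inequalities with $m=[G:H]$. The only difference is presentational (you chain upward from $|L(H)|$ while the paper chains downward from $|L(G)|$), so no further comment is needed.
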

\commentvspace{-10pt}
\begin{proof}
Observe that for each $h\in H$, $Z_H(h)=Z_G(h)\cap H$. In general, for a $g\in G$, let $Z_H(g) = Z_G(h)\cap H$. By Proposition 13 of \cite[p.~93]{dum03}. We get
\[ |Z_G(g)\cap H| = \frac{|Z_G(g)||H|}{|Z_G(g)H|} \geq \frac{|Z_G(g)||H|}{|G|}.\]
Set $m=[G:H]$. Thus, we have $m|Z_H(g)|\geq |Z_G(g)|$. Also, by double counting, we get
\[\sum_{g\in G} |Z_H(g)| = |\{(g,h) : g\in G, h\in H, gh=hg\}|=\sum_{h\in H} |Z_G(h)|.\]
Therefore {\small
\[ |L(G)| = \sum_{g\in G} |Z_G(g)|\leq \sum_{g\in G} m |Z_H(g)| =\sum_{h\in H} m |Z_G(h)| =\sum_{h\in H} m^2 |Z_H(h)| \]}
which would directly imply our result.
\end{proof}

We record a stronger result without any proof (see, for eg., \cite{gal70}) and give some corollaries following \cite{gur06}. 

\begin{theorem}\label{18} 
Suppose $G$ is a finite group. Let $H\nm G$. Then 
\commentvspace{-10pt} \[ \cp(G) \leq \cp(H) \cp(G/H). \]
\end{theorem}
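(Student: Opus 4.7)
The plan is to prove the equivalent inequality $|L(G)| \le |L(H)|\cdot|L(G/H)|$; dividing by $|G|^2 = |H|^2|G/H|^2$ then yields the claim. Proposition~\ref{3} will serve as the bridge at all three levels $G$, $H$, and $G/H$.

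The engine of the proof is a coset lemma, used twice. In the form I want first, it says that for any $x \in G$ and any coset $bH$, the intersection $Z_G(x) \cap bH$ is either empty or a coset of $Z_H(x) := Z_G(x)\cap H$, because the ratio of two of its elements lies in both $Z_G(x)$ and $H$; a non-empty intersection also forces $bH \in Z_{G/H}(xH)$. Partitioning $Z_G(x)$ across cosets of $H$ and counting the at-most $|Z_{G/H}(xH)|$ non-empty pieces gives the pointwise bound $|Z_G(x)| \le |Z_H(x)|\cdot|Z_{G/H}(xH)|$. Summing over $x\in G$ and regrouping by cosets of $H$ yields
\[
|L(G)| \;=\; \sum_{x\in G}|Z_G(x)| \;\le\; \sum_{aH\in G/H}|Z_{G/H}(aH)|\sum_{x\in aH}|Z_H(x)|.
\]

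It remains to show $\sum_{x\in aH}|Z_H(x)| \le |L(H)|$ for \emph{every} coset $aH$, not just $aH = H$. I would prove this by double counting the set $\{(x,h)\in aH\times H : xh=hx\}$: summing over $x$ first gives the quantity of interest, while summing over $h$ first gives $\sum_{h\in H}|Z_G(h)\cap aH|$, and the coset lemma, now applied to $Z_G(h)$ against the fixed coset $aH$, bounds each summand by $|Z_H(h)|$. Summing over $h\in H$ then produces $|L(H)|$ by Proposition~\ref{3} applied in $H$. Substituting back, and using Proposition~\ref{3} one last time in $G/H$, closes the argument: $|L(G)| \le |L(H)|\cdot\sum_{aH}|Z_{G/H}(aH)| = |L(H)|\cdot|L(G/H)|$. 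The main pitfall is precisely this second application of the coset lemma: the bound $\sum_{x\in aH}|Z_H(x)| \le |L(H)|$ is an equality when $a\in H$ but can be strict otherwise, and it does not follow from any obvious symmetry of $L(H)$; the swap-of-summation trick is what makes the bound uniform in the coset $aH$ and lets the whole argument go through.
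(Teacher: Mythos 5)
Your proof is correct, but note that the paper does not actually prove Theorem \ref{18}: it explicitly records the result without proof and defers to Gallagher \cite{gal70}, only proving the weaker Theorems \ref{16} and \ref{17} separately. Your argument therefore supplies what the paper omits, and it does so in the same elementary spirit as the paper's other proofs, relying only on Proposition \ref{3} and coset counting. The two applications of your coset lemma are both sound: $Z_G(x)\cap bH$ is empty or a coset of $Z_H(x)=Z_G(x)\cap H$, and a non-empty intersection forces $bH\in Z_{G/H}(xH)$, giving the pointwise bound $|Z_G(x)|\le |Z_H(x)|\,|Z_{G/H}(xH)|$; and the swap of summation over $\{(x,h)\in aH\times H: xh=hx\}$ correctly reduces the coset sum $\sum_{x\in aH}|Z_H(x)|$ to $\sum_{h\in H}|Z_G(h)\cap aH|\le \sum_{h\in H}|Z_H(h)|=|L(H)|$, which is exactly the step a naive argument (valid only for $aH=H$) would miss. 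Compared with Gallagher's original treatment, which is usually phrased via class functions and characters (the route the paper hints at when it mentions representation theory elsewhere), your proof is purely combinatorial and could be inserted into the paper as a proof of Theorem \ref{18}; what you give up is the finer information Gallagher's machinery yields about when equality holds, which your inequalities do not track.
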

\commentvspace{-10pt}
\begin{cor}\label{18.1}
Let $G, H$ be two groups and suppose $A=G\ltimes H$ is a semi-direct product of these groups (for example, see \cite[p.~175]{dum03}. Then $\cp(A) \leq \cp(G)\cp(H)$.
\end{cor}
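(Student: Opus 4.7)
The plan is to reduce the corollary directly to Theorem \ref{18}. The key structural fact about a semi-direct product $A = G \ltimes H$ is that, by construction, $H$ embeds into $A$ as a normal subgroup, while $G$ embeds as a complementary (generally non-normal) subgroup; concretely, one has a short exact sequence
\[ 1 \to H \to A \to G \to 1 \]
which splits, so that $A/H \cong G$.

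With this identification in hand, I would simply invoke Theorem \ref{18} with the normal subgroup taken to be $H \nm A$. That theorem gives
\[ \cp(A) \leq \cp(H)\, \cp(A/H), \]
and substituting the isomorphism $A/H \cong G$ yields
\[ \cp(A) \leq \cp(H)\, \cp(G), \]
which is exactly the inequality claimed.

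There is essentially no obstacle here; the corollary is a one-line consequence of Theorem \ref{18} once the structure of the semi-direct product is spelled out. The only point requiring a modicum of care is the orientation convention for the symbol $\ltimes$, namely that $H$ (and not $G$) is the factor being declared normal in $G \ltimes H$; once this is fixed in accordance with the convention of \cite[p.~175]{dum03} cited in the statement, no further computation is needed. It is worth noting that the direction of the inequality is consistent with the intuition advertised earlier in the paper: passing to the quotient $G$ and the normal piece $H$ individually can only lose commuting information relative to the ambient group $A$.
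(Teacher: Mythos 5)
Your proposal is correct and matches the paper's own proof exactly: both apply Theorem \ref{18} with the normal subgroup $H \nm A$ and the identification $A/H \cong G$. Nothing further is needed.
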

\commentvspace{-10pt}
\begin{proof}
This is true as $H\nm A$ and $A/H \cong G$.  
\end{proof}
\commentvspace{-10pt}

Intuitively, semi-direct product has more commutative information (the joining map) compared to the direct product with the same underlying sets. For example, treating $H, K \leq H\times K$ via natural inclusions, $hk=kh$ for each $h\in H$ and $k\in K$. This is not the case in semi-direct products. So this corollary should follow from our intuition and Proposition \ref{12}.

\begin{cor}\label{18.2}
Let $1=G_0\nm G_1 \ldots \nm G_k = G$ be a composition series of a group. Let $H_i = G_i/G_{i-1}$ be the $i^{th}$ composition factor. Then $\cp(G)\leq \prod_{i=1}^{k} \cp(H_i)$.
\end{cor}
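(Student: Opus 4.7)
The plan is to prove this by induction on the length $k$ of the composition series, with Theorem \ref{18} as the engine at each step. The inductive framing is natural because a composition series of length $k$ immediately restricts to a composition series of $G_{k-1}$ of length $k-1$, with composition factors $H_1,\ldots,H_{k-1}$ in the same order.

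For the base case $k=1$, we have $1 = G_0 \nm G_1 = G$, so $H_1 = G_1/G_0 \cong G$ and the inequality $\cp(G) \leq \cp(H_1)$ is actually an equality, hence trivial.

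For the inductive step, assume the result for all composition series of length $k-1$. Given the length-$k$ series $1 = G_0 \nm G_1 \nm \cdots \nm G_k = G$, note that $G_{k-1} \nm G_k = G$ (this is part of the definition of a composition series), so Theorem \ref{18} applies directly to the pair $(G, G_{k-1})$ and yields
\[
\cp(G) \;\leq\; \cp(G_{k-1})\,\cp(G/G_{k-1}) \;=\; \cp(G_{k-1})\,\cp(H_k).
\]
The truncated chain $1 = G_0 \nm G_1 \nm \cdots \nm G_{k-1}$ is a composition series of $G_{k-1}$ with factors $H_1, \ldots, H_{k-1}$, so the inductive hypothesis gives $\cp(G_{k-1}) \leq \prod_{i=1}^{k-1} \cp(H_i)$. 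Substituting, the claimed bound $\cp(G) \leq \prod_{i=1}^{k}\cp(H_i)$ drops out.

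I do not expect any serious obstacle: the only thing one needs to verify carefully is that Theorem \ref{18} is being invoked correctly, which requires only that $G_{k-1} \nm G$ (true by the definition of a composition series, where consecutive terms are normal) and that the quotient $G/G_{k-1}$ is literally the composition factor $H_k$ (true by definition). Note also that we do \emph{not} need the stronger property that each $G_i$ be normal in the full group $G$, which would be a much more delicate assumption; Theorem \ref{18} only asks for normality at each step, matching the composition series definition exactly.
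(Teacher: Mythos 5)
Your proposal is correct and follows essentially the same route as the paper, which proves this corollary exactly by induction on $k$ together with Theorem \ref{18} applied to the normal subgroup $G_{k-1}\nm G$. Your additional remark that only the normality of consecutive terms (not normality in all of $G$) is needed is accurate and harmless.
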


\begin{proof}
Follows from induction on $k$ and Theorem \ref{18}.
\end{proof}

As we had noted earlier, it is not possible to find a global non-trivial lower bound. However, just like Proposition \ref{2}, we can define some lower bounds depending on the properties of $G$. We improve Theorem 2.1 of \cite{bas17} in the setting of groups. 

\begin{theorem}[Group version of {\cite[Theorem~2.1]{bas17}}]\label{19}
 Suppose $G$ is a finite group. Let $p$ be the smallest prime dividing $|G|$. Let $m=[G:Z(G)]$. Then we have
\[\cp(G)\geq \frac{(p+1)m-p}{m^2}.\]
{\small Equality holds if and only if $[Z_G(a):Z(G)]=p$ for each $a\notin Z(G)$.}
\end{theorem}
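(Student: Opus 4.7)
The plan is to mimic the proof of Theorem \ref{13} but replace the bound $[G:Z_G(a)] \geq 2$ with a sharper one, and this time use it to lower-bound $|Z_G(a)|$ rather than upper-bound it. Starting from Proposition \ref{3}, I would split $|L(G)| = \sum_{x \in G} |Z_G(x)|$ according to whether $x \in Z(G)$ or not. Central elements each contribute $|G|$ to the sum, so the central part equals $|Z(G)|\cdot|G|$. The work is in finding a uniform lower bound on $|Z_G(x)|$ for $x \notin Z(G)$.

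The key estimate is that $[Z_G(x):Z(G)] \geq p$ for every non-central $x$. To see this, note that $xZ(G)$ is a nontrivial element of the quotient $G/Z(G)$, and the cyclic subgroup $\langle xZ(G)\rangle$ it generates sits inside $Z_G(x)/Z(G)$ (since $x$ certainly commutes with itself and with all of $Z(G)$). The order of $xZ(G)$ divides $m = |G/Z(G)|$, which in turn divides $|G|$; hence this order is at least $p$, the smallest prime dividing $|G|$. Therefore $|Z_G(x)| \geq p|Z(G)|$ whenever $x \notin Z(G)$.

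Putting the two contributions together,
\[
|L(G)| \;\geq\; |Z(G)|\cdot|G| \;+\; (|G| - |Z(G)|)\cdot p|Z(G)|.
\]
Dividing by $|G|^2$ and substituting $|Z(G)|/|G| = 1/m$, a short simplification gives
\[
\cp(G) \;\geq\; \frac{1}{m} + \frac{p(m-1)}{m^2} \;=\; \frac{(p+1)m - p}{m^2},
\]
which is the desired bound.

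For the equality characterization, I would simply trace back: the central part of the sum was handled with an equality, and the only inequality used in the non-central part was $|Z_G(x)| \geq p|Z(G)|$. So equality in the final bound forces $[Z_G(x):Z(G)] = p$ for every $x \notin Z(G)$, and conversely this condition yields equality throughout. There is no real obstacle here — the mild subtlety is just making sure the passage from $m$-divisibility to $p$-divisibility is justified, which follows because any prime divisor of $m = [G:Z(G)]$ divides $|G|$ and is therefore $\geq p$.
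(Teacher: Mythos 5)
Your proposal is correct and follows essentially the same route as the paper: split $\sum_{x\in G}|Z_G(x)|$ over central and non-central elements and bound the latter below by $p|Z(G)|$, with equality exactly when $[Z_G(a):Z(G)]=p$ for all non-central $a$. The only difference is that you supply an explicit justification (via the order of $xZ(G)$ in $G/Z(G)$ dividing $m\mid |G|$) for the key estimate $[Z_G(x):Z(G)]\geq p$, which the paper simply asserts with ``observe that.''
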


\commentvspace{-10pt}  

\begin{proof}
Observe that for each $a\in G\setminus Z(G)$, $p|Z(G)|\leq |Z_G(a)|$. Using this, we have
\commentvspace{-4pt}
\begin{align*}
|L(G)| = \sum_{g\in G}|Z_G(g)| &=|G||Z(G)|+\sum_{g\in G\setminus Z(G)}|Z_G(g)| \\
&\geq|G||Z(G)|+ \sum_{g\in G\setminus Z(G)} p|Z(G)|\\
&=|G||Z(G)|+p(|G|-|Z(G)|)|Z(G)|
\end{align*}
\commentvspace{-4pt}

whence the given inequality follows. Equality holds if and only if $p|Z(G)|= |Z_G(a)|$ for each $a\notin Z(G)$.
\end{proof}

\begin{remark}\label{r5} 
Once again, it can be shown that any group for which equality holds is of the form $P\times H$, where $H$ is an abelian group and $P$ is a p-group with the aforementioned equality. For if $a\in G$ has $(o(a),p)=1$, we must have $a\in Z(G)$. Otherwise, as $[Z_G(a):Z(G)] = p$ and $a\in Z_G(a)$, by taking quotients we would get $p|o(a)$. Rest of the proof is similar to Remark \ref{r5}. Once again, one can try to characterize all such $p-$groups. 
\end{remark}

Let $G$ be a non-abelian group. Observe that $m\geq p^2>p$. Using this, we get $\cp(G) > p/m$. Now $\cp(G)=K/|G|$, where $K$ is the class number of $G$. So, we have $Km>p|G|$, that is $K>p|Z(G)|$. Using this, we have this pretty fascinating group theoretic result.

\begin{cor}\label{19.1}
Let $G$ be a finite non-abelian group of order $n$ and let $p$ be the smallest prime dividing $n$, then the number of conjugacy classes of $G$ is at least $p|Z(G)|+1$. Hence, there are at least $(p-1)|Z(G)|+1\geq p$ many non-trivial conjugacy classes.
\end{cor}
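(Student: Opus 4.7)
The plan is to follow the roadmap sketched in the paragraph immediately preceding the corollary, making each step rigorous. The core idea is to combine the lower bound from Theorem \ref{19} with the class-equation identity from Theorem \ref{4}, and then exploit integrality together with the fact that for a non-abelian group the index $m = [G:Z(G)]$ must exceed $p$.

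First, I would invoke Theorem \ref{19} to write $\cp(G) \geq \frac{(p+1)m - p}{m^2}$, where $m = [G:Z(G)]$. A one-line algebraic rearrangement gives
\[ \cp(G) \geq \frac{p}{m} + \frac{m - p}{m^2}. \]
Since $G$ is non-abelian, Proposition \ref{11} says $G/Z(G)$ is non-cyclic, so $m$ is neither $1$ nor prime; in particular $m \geq p^2 > p$, which makes the extra term strictly positive and yields $\cp(G) > p/m$.

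Next I would substitute $\cp(G) = K/|G|$ from Theorem \ref{4}, where $K$ is the class number of $G$, and use $m = |G|/|Z(G)|$. The strict inequality $\cp(G) > p/m$ then rearranges to $K > p|Z(G)|$. Since both sides are integers, this forces $K \geq p|Z(G)| + 1$, which is the first assertion of the corollary.

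For the second sentence, each of the $|Z(G)|$ central elements contributes a singleton conjugacy class, so the number of non-trivial classes is at least $p|Z(G)| + 1 - |Z(G)| = (p-1)|Z(G)| + 1$, and this is $\geq p$ because $|Z(G)| \geq 1$. The argument is a straightforward chaining of previously established results, and I do not expect a genuine obstacle; the only subtle point is remembering that the strict inequality $\cp(G) > p/m$ (rather than merely $\geq$) is what allows the integrality step to produce the ``$+1$'', and this is precisely what the non-abelian hypothesis $m \geq p^2$ buys us.
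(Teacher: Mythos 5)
Your proposal is correct and follows essentially the same route as the paper, whose proof is exactly the paragraph preceding the corollary: Theorem \ref{19} (or just $m\geq p^2>p$) gives the strict bound $\cp(G)>p/m$, which via Theorem \ref{4} and integrality yields $K\geq p|Z(G)|+1$, and subtracting the $|Z(G)|$ singleton classes gives the count of non-trivial classes. Your explicit attention to why the inequality is strict (so that integrality buys the ``$+1$'') is a welcome clarification of a point the paper passes over quickly.
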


Till now, we tried to look at the size and prime factorization of the size of the groups to bound commuting probability. One could also study specific classes of group. We now try to formulate some results specifically about simple groups via elementary methods. A proper study would once again require advanced tools like representation theory, which we do not intend to use. Nevertheless, we give ample references for the interested readers. 

The first simple non-abelian groups has order $60$. Beyond that, all simple non-abelian groups have order $|G|\geq 168$ and, according to \cite{conrconj}, at least $6$ conjugacy classes. We shall make use of this fact. Here is a group theoretic result which we will need.

\begin{prop}\label{20} 
Let $G$ be a non-abelian simple group of order $n \geq k!$. Then $n$ has no subgroup $H$ of index $[G:H]\leq k$. Thus, every conjugacy class has size at least $k+1$. 
\end{prop}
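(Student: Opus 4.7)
The plan is to apply the standard coset-action trick, invoking simplicity to force a trivial kernel. Suppose, for a contradiction, that $G$ has a proper subgroup $H$ with $m:=[G:H]\le k$, so $m\ge 2$. Left multiplication of $G$ on the set of cosets $G/H$ gives a homomorphism $\varphi\colon G\to \operatorname{Sym}(G/H)\cong S_m$, and $\ker\varphi$ is a normal subgroup of $G$ contained in $H$.

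Since $G$ is simple, $\ker\varphi$ is either $\{1\}$ or $G$. The latter would give $G\le H$, contradicting $m\ge 2$; hence $\varphi$ is injective and $G$ embeds into $S_m$. In particular $n=|G|\le m!\le k!$. Combined with the hypothesis $n\ge k!$, this forces $m=k$ and $n=k!$, so $\varphi$ is an isomorphism $G\cong S_k$. But $S_k$ is abelian for $k\le 2$, and for $k\ge 3$ it has $A_k$ as a proper non-trivial normal subgroup, so $S_k$ is never a non-abelian simple group --- the desired contradiction.

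For the statement about conjugacy classes, observe that $Z(G)\nm G$ is a proper subgroup (as $G$ is non-abelian), so by simplicity $Z(G)=\{1\}$. Therefore every $a\ne 1$ has $Z_G(a)$ as a proper subgroup of $G$, and the first part of the proposition gives $|\cC_a|=[G:Z_G(a)]\ge k+1$.

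The only mildly delicate point is the boundary case $n=k!$, which I close using the classical fact that $S_k$ itself is never non-abelian simple. Alternatively, one can compose $\varphi$ with the sign homomorphism $S_m\to\{\pm 1\}$; simplicity together with $|G|\ge 60$ then rules out the kernel being trivial, so $\varphi(G)\le A_m$ and hence $n\le m!/2<k!$, bypassing the boundary case altogether. Either route is essentially routine once the coset-action setup is in place.
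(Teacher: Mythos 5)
Your proof is correct and follows essentially the same route as the paper: the left-multiplication action on cosets, triviality of the kernel by simplicity, and the order comparison $|G|\geq k!$ forcing $G\cong S_k$, which cannot be non-abelian simple. You are somewhat more explicit than the paper about the boundary case $n=k!$ and about deducing the conjugacy-class bound from $Z(G)=1$, but these are elaborations of the same argument rather than a different approach.
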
 
\commentvspace{-15pt}
\begin{proof}
Note that it is enough to show that there is no subgroup of index $k$. Suppose, on the contrary, there is a subgroup $H$ with index $k$. Let $L$ be the set of left cosets of $H$. Then $G$ acts on $L$ via left multiplication. Using this, we get a homomorphism $\phi : G\to S_k$. As $G$ is simple, we must have $Ker(\phi)=1$. But then $|G|\geq k!$. Thus we must have $\phi$ is an isomorphism, which would contradict that $G$ is simple. \end{proof}
\commentvspace{-10pt}
\begin{prop}\label{21}
For any simple non-abelian group $G$, $\cp(G) < \frac{1}{5}$.
\end{prop}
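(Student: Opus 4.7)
The plan is to split into cases based on the order of $G$. The only simple non-abelian group of order less than $168$ is $A_5$ (of order $60$), so I would first dispatch $A_5$ directly, and then handle every remaining case uniformly by combining Proposition \ref{20} with the class equation.

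For $A_5$, I would identify the conjugacy classes via Theorem \ref{10}. The even cycle types in $S_5$ are the identity, the $3$-cycles, the $(2,2)$-cycles, and the $5$-cycles; of these, only the $5$-cycles form an ODC cycle type, so only that class splits in passing from $S_5$ to $A_5$. This gives a class number of $5$, and Theorem \ref{4} immediately yields $\cp(A_5) = 5/60 = 1/12 < 1/5$.

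For all other simple non-abelian groups I have $|G| \geq 168 > 120 = 5!$. Since $G$ is simple and non-abelian, $Z(G)$ is a proper normal subgroup of $G$ and hence trivial, so for every $a \neq 1$ the centraliser $Z_G(a)$ is a proper subgroup. Proposition \ref{20} applied with $k = 5$ then gives $[G : Z_G(a)] \geq 6$, i.e., every non-trivial conjugacy class has size at least $6$. Writing $K$ for the class number, the class equation yields $|G| \geq 1 + 6(K-1)$, so by Theorem \ref{4},
\[ \cp(G) \;=\; \frac{K}{|G|} \;\leq\; \frac{|G|+5}{6|G|} \;=\; \frac{1}{6} + \frac{5}{6|G|}. \]
Plugging in $|G| \geq 168$ gives a bound strictly below $1/5$ (the inequality $\tfrac{1}{6} + \tfrac{5}{6|G|} < \tfrac{1}{5}$ reduces to the painless $|G| > 25$).

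There is no real obstacle here: the substantive input is Proposition \ref{20}, which pins every non-trivial conjugacy class to size at least $6$ in one shot for all but one simple non-abelian group, and the class equation then forces the class number to be small relative to $|G|$. The only thing to check by hand is the base case $A_5$. Note that the bound is far from tight — in fact the argument shows $\cp(G) \to 1/6$ as $|G| \to \infty$ over simple non-abelian groups — but it comfortably clears the threshold $1/5$.
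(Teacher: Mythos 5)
Your proof is correct, and it is a mild but genuine streamlining of the paper's argument. Both proofs share the same skeleton: dispose of $A_5$ separately, then for $|G|\geq 168$ use Proposition \ref{20} with $k=5$ to force every non-trivial conjugacy class to have size at least $6$, and feed this into the class equation. The difference lies in how the class equation is exploited. The paper bounds the smallest non-trivial class size $k$ via $k(K-1)\leq |G|-1$ and then needs the additional cited fact that every simple non-abelian group of order at least $168$ has $K\geq 6$ conjugacy classes (that is what legitimizes its step $K(|G|+5)>6|G|$, hence $k<\tfrac{6|G|}{5K}$); combining with $k\geq 6$ from Proposition \ref{20} yields $5K<|G|$. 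You instead bound $K$ directly: $|G|\geq 1+6(K-1)$ gives $\cp(G)\leq \tfrac16+\tfrac{5}{6|G|}$, and the order hypothesis (anything above $|G|>25$) finishes. So your route drops one of the paper's two external inputs --- the ``at least six conjugacy classes'' fact quoted from \cite{conrconj} --- at no real cost, and it produces the cleaner explicit estimate $\cp(G)\leq \tfrac16+\tfrac{5}{6|G|}$; your hand computation of the five classes of $A_5$ via Theorem \ref{10} also matches the paper's Example \ref{ealternating}. One small caveat on your closing remark: the argument shows only that your \emph{upper bound} tends to $\tfrac16$, not that $\cp(G)\to\tfrac16$ over simple non-abelian groups; in fact $\cp(G)\to 0$ there, as Theorem \ref{23} (or Dixon's Theorem \ref{22}) shows.
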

\commentvspace{-12pt}
\begin{proof}
As $G$ is simple, $G$ has a non-trivial center. Suppose $K$ is the number of conjugacy classes of $G$ and $k$ is the size of the smallest non-trivial conjugacy class of $G$. Then by considering the average size of the non-trivial conjugacy classes, we get $k(K-1)\le (|G|-1)$. Now $K(|G|+5)> 6|G|$ which would say $k<\frac{6|G|}{5K}$. But by Proposition \ref{20}, we must have $k\geq 6$. Thus $\cp(G) =\frac{K}{|G|} < \frac{1}{5}$ for each simple $G$ with order at least $168$. But the only simple group of order $60$ is $A_5$, which has commuting probability $\frac{1}{12}<\frac{1}{5}$. Therefore, our bound holds for every simple group.
\end{proof}
\commentvspace{-15pt}
One can make this bound considerably better with some representation theory. Here is the strongest possible bound.

\begin{theorem}[Dixon]\label{22}
Let $G$ be a simple non-abelian group. Then $P_G\leq \frac{1}{12}$. Equality holds only for $A_5$. 
\end{theorem}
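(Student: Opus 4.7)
The plan is to upgrade the approach of Proposition~\ref{21} by invoking character theory. Writing $\cp(G) = K/|G|$ via Theorem~\ref{4}, where $K$ is both the class number and the number of complex irreducible characters, the crucial input is the Wedderburn degree identity $|G| = \sum_{\chi \in \mathrm{Irr}(G)} \chi(1)^2$. Because a non-abelian simple group is perfect ($G = G'$), the trivial character is the only irreducible character of degree~$1$.

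Let $m = m(G)$ denote the minimum degree of a non-trivial irreducible character. The identity above immediately gives
\[ K \leq 1 + \frac{|G|-1}{m^2}, \qquad \text{hence} \qquad \cp(G) < \frac{1}{m^2} + \frac{1}{|G|}. \]
If $m(G) \geq 4$, one deduces $\cp(G) < 1/16 + 1/|G|$, which is comfortably below $1/12$ for every non-abelian simple group other than $A_5$ (the only such group of order $< 168$). So the task reduces to controlling the situation when $m(G) \leq 3$.

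For $m = 1$ nothing needs to be done since $G$ is perfect. For $m = 2$, a faithful two-dimensional irrep would give a projective embedding $G \hookrightarrow \mathrm{PGL}_2(\mathbb{C})$; the classical classification of finite subgroups of $\mathrm{PGL}_2(\mathbb{C})$ leaves $A_5$ as the only non-abelian simple possibility, yet $A_5$ has character degrees $1,3,3,4,5$ and hence no two-dimensional irrep, ruling this out. For $m = 3$, one invokes the classical classification of finite simple groups admitting a faithful complex representation of dimension $3$, leaving only $A_5$ and $\mathrm{PSL}(2,7)$. The latter has character degrees $1,3,3,6,7,8$, giving $K = 6$ and $\cp = 6/168 = 1/28 < 1/12$; for the former, Example~\ref{ealternating} records $\cp(A_5) = 1/12$, which realises the equality.

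The main obstacle is precisely this last step: classifying non-abelian simple groups of small minimum character degree. The Wedderburn-based estimate is entirely elementary once one has $m(G) \geq 4$, but pinning down $m(G)$ in the remaining range requires the non-trivial representation-theoretic input the paper explicitly defers, and this is why the theorem is quoted rather than proved in the text.
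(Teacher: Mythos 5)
The paper does not prove this theorem at all: it quotes it and points to Dixon's problem in \cite{can73}, remarking only that the solution ``uses facts from representation theory and matrix groups.'' Your sketch is of exactly that character, and its skeleton is sound: writing $\cp(G)=K/|G|$, using perfectness to kill degree-one characters, and bounding $K$ via $|G|=\sum_{\chi}\chi(1)^2$ correctly gives $\cp(G)<\frac{1}{m^2}+\frac{1}{|G|}$, and since every non-abelian simple group has order at least $60>48$, the case $m\geq 4$ is indeed strictly below $\frac{1}{12}$ (your parenthetical about order $<168$ is not actually needed there). The $m=2$ exclusion via the embedding into $\mathrm{PGL}_2(\mathbb{C})$ and the classical list of its finite subgroups is fine, and the $m=3$ case does reduce to $A_5$ and $\mathrm{PSL}(2,7)$, whose class numbers you compute correctly, so the equality statement comes out right. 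The honest caveat you raise is the real one: the $m\leq 3$ analysis rests on the classification of finite subgroups of $\mathrm{PGL}_2(\mathbb{C})$ and, more seriously, Blichfeldt-type classification of finite simple subgroups of $\mathrm{GL}_3(\mathbb{C})$, which you quote as black boxes. So what you have is a correct reduction of the theorem to classical classification results rather than a self-contained proof --- which is consistent with, and no weaker than, the paper's own treatment, since the paper deliberately omits the argument and defers to \cite{can73} and to Theorem 11 of \cite{gur06} for the equality analysis.
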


A proof can be found on \cite[p.~302]{can73} (as a problem due to J.Dixon) and uses facts from representation theory and matrix groups. We state a fascinating corollary. The proof follows from Theorem \ref{22}, Corollary \ref{18.2} and the Jordan-H\"older Theorem for groups.

\begin{cor}\label{22.1}
Every finite group $G$ with $\cp(G)>\frac{1}{12}$ is solvable.
\end{cor}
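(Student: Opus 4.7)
The plan is to prove the contrapositive: if $G$ is not solvable, then $\cp(G)\leq \frac{1}{12}$. First I would invoke the fact that every finite group admits a composition series $1 = G_0 \nm G_1 \nm \cdots \nm G_k = G$, in which each composition factor $H_i = G_i/G_{i-1}$ is simple. The Jordan--H\"older Theorem guarantees that the multiset of composition factors (up to isomorphism) is an invariant of $G$, and it is a standard fact that $G$ is solvable if and only if every $H_i$ is abelian (equivalently, cyclic of prime order). Since $G$ is assumed non-solvable, at least one composition factor, say $H_{i_0}$, must be non-abelian, hence a non-abelian simple group.

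Next I would apply Corollary \ref{18.2} to bound
\[
\cp(G) \leq \prod_{i=1}^{k} \cp(H_i).
\]
Each factor on the right is at most $1$, so I can discard all factors except $H_{i_0}$ and obtain $\cp(G) \leq \cp(H_{i_0})$. Dixon's Theorem \ref{22} then gives $\cp(H_{i_0}) \leq \frac{1}{12}$, yielding $\cp(G) \leq \frac{1}{12}$, which contradicts the hypothesis $\cp(G) > \frac{1}{12}$. Therefore $G$ must be solvable.

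There is not much of a genuine obstacle here since the hard work is entirely absorbed into Theorem \ref{22} (Dixon) and Corollary \ref{18.2} (the composition-factor bound). The only subtlety worth flagging is that one needs the Jordan--H\"older characterization of solvability as ``all composition factors are abelian''; without this, the existence of a non-abelian simple composition factor would not be forced by non-solvability. Everything else is a one-line estimate.
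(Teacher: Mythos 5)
Your proposal is correct and follows exactly the route the paper intends: the paper's proof of this corollary is precisely the combination of Dixon's Theorem \ref{22}, Corollary \ref{18.2}, and the Jordan--H\"older characterization of solvability via abelian composition factors, which you have simply written out in full detail.
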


For an alternate proof and a description of equality, readers are referred to Theorem 11 of \cite{gur06}. To conclude our discussion on simple groups, we look at the following remarkable result. It can be found in \cite{harden} in the comments by I.Agol and D.L.Harden.\comment{It is well known that number of finite groups with a given number of conjugacy classes is finite. This result is a slight variation of the same in the setting of commuting probability and simple groups. }

\begin{theorem} \label{23}
Let $\epsilon>0$. Then the number of simple finite non-abelian groups with $\cp(G)\geq \epsilon$ is finite.
\end{theorem}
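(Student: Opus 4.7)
The plan is to combine Proposition \ref{20} with the class-equation identity $\cp(G) = K/|G|$ from Theorem \ref{4} to show that, among non-abelian simple groups, $\cp(G) \to 0$ as $|G| \to \infty$. Since there are only finitely many groups of any bounded order, this is enough.

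First I would use that a non-abelian simple group $G$ has trivial center (since $Z(G)$ is a proper normal subgroup), so the only singleton conjugacy class is $\{e\}$. Given an integer $k \geq 1$ such that $|G| \geq k!$, Proposition \ref{20} applied to proper subgroups (the centralizers $Z_G(a)$ for $a \neq e$) forces every non-identity conjugacy class to have size at least $k+1$. Writing $K$ for the class number of $G$, summing the sizes of the $K-1$ non-identity classes and using that they partition $G \setminus \{e\}$ yields
\[
(K-1)(k+1) \;\leq\; |G| - 1,
\]
so $K \leq 1 + \frac{|G|-1}{k+1}$ and therefore
\[
\cp(G) \;=\; \frac{K}{|G|} \;\leq\; \frac{1}{|G|} + \frac{1}{k+1} \;\leq\; \frac{1}{k!} + \frac{1}{k+1}.
\]

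Now fix $\epsilon > 0$ and choose $k$ large enough that $\frac{1}{k!} + \frac{1}{k+1} < \epsilon$. Then every non-abelian simple group $G$ with $|G| \geq k!$ satisfies $\cp(G) < \epsilon$. Equivalently, every non-abelian simple $G$ with $\cp(G) \geq \epsilon$ must satisfy $|G| < k!$. Since there are only finitely many groups of order at most $k!$, the set of non-abelian simple groups with $\cp(G) \geq \epsilon$ is finite, as desired.

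The only mildly subtle point is the application of Proposition \ref{20}: one has to notice that a centralizer $Z_G(a)$ with $|Z_G(a)| < |G|$ (which holds for $a \notin Z(G) = \{e\}$) is a \emph{proper} subgroup, so the proposition legitimately applies and yields $|\cC_a| = [G:Z_G(a)] \geq k+1$. Everything else is a direct counting argument, and no appeal to Dixon's sharper bound (Theorem \ref{22}) or representation theory is required.
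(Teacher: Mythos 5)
Your proof is correct. It rests on the same two ingredients as the paper's argument --- the class-number identity $\cp(G)=K/|G|$ (Theorem \ref{4}) and Proposition \ref{20} --- but you run the estimate in the opposite direction. The paper assumes $\cp(G)\geq\epsilon$, borrows from the proof of Proposition \ref{21} the inequality $k<\frac{6|G|}{5K}$ for the smallest non-trivial class size $k$ (which depends on the cited fact that non-abelian simple groups of order at least $168$ have at least $6$ conjugacy classes), concludes $k<\frac{6}{5\epsilon}$, and only then invokes Proposition \ref{20} to force $|G|<k!\leq m!$ with $m=\lfloor 6/(5\epsilon)\rfloor$. You instead bound $\cp(G)$ from above for every non-abelian simple group of order at least $k!$: triviality of the center together with Proposition \ref{20} gives $(K-1)(k+1)\leq |G|-1$, hence $\cp(G)\leq\frac{1}{k!}+\frac{1}{k+1}$, and choosing $k$ with this quantity below $\epsilon$ confines all groups with $\cp(G)\geq\epsilon$ to orders below $k!$. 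What your route buys is independence from the ``at least $6$ conjugacy classes'' input \cite{conrconj} and from Proposition \ref{21} altogether, so no case split between order $60$ and order at least $168$ is needed; what the paper's route buys is reuse of machinery already set up for Proposition \ref{21} and a concrete threshold in terms of $\epsilon$. Your side remark on why Proposition \ref{20} applies --- centralizers of non-identity elements are proper subgroups because $Z(G)=1$ --- is exactly the justification needed there.
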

\commentvspace{-10pt}
\begin{proof}
We follow the notation used in the proof of Proposition \ref{21}. Let $|G|\geq 168$. Now if $\cp(G)\geq \epsilon$, then $k<\frac{6}{5\epsilon}$. By Proposition \ref{20}, $k! > |G|$. So if $m=\left\lfloor\frac{6}{5\epsilon}\right\rfloor$, we should have $|G|<m!$ and clearly there are finitely many such groups.
\end{proof}
\commentvspace{-10pt}
We conclude this section by trying to relate commuting probability to the derived subgroup $G'$. Recall that the larger $G'$ is, the farther away $G$ is from being abelian. In fact, with some elementary representation theory, one can relate these two quantities pretty easily. For example, see the appendix of \cite{bro19}. We record the result without a proof along with an obvious improvement. This would be followed by a bound on the other side.

\begin{theorem}\label{24} 
Let $G$ be a finite group, then 
\[\cp(G) \leq \frac{1}{4} \left( 1+ \frac{3}{|G'|}\right).\]
In fact, if $p$ is the smallest prime dividing the order of $G$, then the above bound can be slightly improved to 
\[ \cp(G) \leq \frac{1}{p^2} \left( 1+ \frac{p^2-1}{|G'|}\right).\]
\end{theorem}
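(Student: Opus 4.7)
The plan is to invoke ordinary complex character theory, which is the natural and standard tool for relating the class number to the derived subgroup. Let $d_1, d_2, \ldots, d_K$ denote the degrees of the complex irreducible characters of $G$. I would use three classical facts: (i) the number of irreducible characters equals the class number $K$ of $G$ (so that $\cp(G) = K/|G|$); (ii) the sum-of-squares identity $\sum_{i=1}^{K} d_i^2 = |G|$; and (iii) the number of linear (degree $1$) characters equals $[G:G']$, because such characters factor through $G/G'$ and an abelian group has as many irreducible characters as its order.

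Using (iii), I would split the sum in (ii) according to linear versus non-linear characters:
\[ |G| \;=\; [G:G'] \;+\; \sum_{d_i \geq 2} d_i^2. \]
For the first bound, each non-linear character contributes at least $4$, so $|G| \geq [G:G'] + 4\bigl(K - [G:G']\bigr)$. Rearranging and dividing by $|G|$ yields
\[ \cp(G) \;=\; \frac{K}{|G|} \;\leq\; \frac{1}{4} + \frac{3[G:G']}{4|G|} \;=\; \frac{1}{4}\left(1 + \frac{3}{|G'|}\right), \]
where we used $[G:G']/|G| = 1/|G'|$.

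For the refinement involving the smallest prime $p$ dividing $|G|$, I would invoke the classical divisibility theorem of Frobenius stating that each character degree $d_i$ divides $|G|$. Hence any non-linear $d_i$ satisfies $d_i \geq p$, and so $d_i^2 \geq p^2$. Repeating the estimate above with $p^2$ in place of $4$ gives $|G| \geq [G:G'] + p^2(K - [G:G'])$, from which the improved bound
\[ \cp(G) \;\leq\; \frac{1}{p^2}\left(1 + \frac{p^2 - 1}{|G'|}\right) \]
follows by the same algebra.

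The main obstacle is not the algebra, which is a short manipulation, but rather the reliance on character-theoretic inputs (existence of the character table, the sum-of-squares identity, and Frobenius divisibility), which the present paper has deliberately tried to avoid. A purely elementary, combinatorial derivation counting conjugacy classes directly in terms of $G'$ seems considerably harder, which is why the author elects to record the theorem without proof and refer to \cite{bro19}.
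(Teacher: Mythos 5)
Your proof is correct, and it is essentially the argument the paper has in mind: the paper records Theorem \ref{24} without proof precisely because it follows from ``elementary representation theory'' as in the appendix of the cited note \cite{bro19}, namely the count $|G| = [G:G'] + \sum_{d_i \geq 2} d_i^2$ combined with $\cp(G) = K/|G|$, exactly as you do. Your refinement via the divisibility of character degrees (so every non-linear degree is at least $p$) is also sound and yields the stated improved bound.
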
 

\begin{remark}\label{r6} 
If $G$ is a non-abelian finite group, then $|G'|\geq p$, which would recover the $5-8$ bound and Theorem \ref{14} using Theorem \ref{24}.
\end{remark}

\begin{prop}[Group version of {\cite[Theorem~2.5]{bas17}}]\label{25} 
Let $G$ be a finite group. Then 
\[\cp(G)\geq \frac{[G:Z(G)]+|G'|-1}{|G'|[G:Z(G)]}\] with equality if and only if $|G'|=|\cC_g|$ for each $g\notin Z(G)$.
\end{prop}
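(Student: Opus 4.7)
The plan is to follow the template of Proposition 3 and Theorem 19 very closely: split the sum $\sum_{g\in G}|Z_G(g)|$ into the central and non-central parts, and get a lower bound on $|Z_G(g)|$ for non-central $g$ by getting an upper bound on $|\cC_g|$. The central part contributes exactly $|Z(G)|\cdot|G|$, and the whole task reduces to bounding individual non-central centralizers from below.

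The key lemma I need is that $|\cC_g|\le |G'|$ for every $g\in G$. The reason is that for any $x\in G$, $xgx^{-1}=g\cdot(g^{-1}xgx^{-1})=g\cdot[g^{-1},x^{-1}]$, and every such commutator lies in $G'$. Hence $\cC_g\subseteq gG'$, which gives $|\cC_g|\le|G'|$. Equivalently, using $|Z_G(g)|\cdot|\cC_g|=|G|$, we obtain $|Z_G(g)|\ge |G|/|G'|$ for every $g\in G$, and in particular for non-central $g$. I expect the identification $xgx^{-1}g^{-1}\in G'$ to be the one step that is not purely bookkeeping, but it is still routine.

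With this lemma in hand, the rest is arithmetic. From Proposition \ref{3},
\begin{align*}
|L(G)| &= \sum_{g\in Z(G)}|Z_G(g)| + \sum_{g\in G\setminus Z(G)}|Z_G(g)| \\
 &\ge |Z(G)|\cdot|G| + (|G|-|Z(G)|)\cdot\frac{|G|}{|G'|}.
\end{align*}
Dividing by $|G|^2$ gives
\[
\cp(G)\ge \frac{1}{[G:Z(G)]}+\frac{1}{|G'|}-\frac{1}{[G:Z(G)]\cdot|G'|}=\frac{[G:Z(G)]+|G'|-1}{|G'|[G:Z(G)]},
\]
which is the desired bound.

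For the equality case, the inequality used was $|Z_G(g)|\ge|G|/|G'|$ for non-central $g$, so equality throughout holds precisely when $|Z_G(g)|=|G|/|G'|$, i.e.\ $|\cC_g|=|G'|$, for every $g\notin Z(G)$. (On the central part no inequality is used.) This gives exactly the stated characterization. The main obstacle, if any, is just to remember the inclusion $\cC_g\subseteq gG'$; once that is invoked, everything else parallels the earlier proofs in the paper.
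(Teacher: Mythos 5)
Your proposal is correct and follows essentially the same route as the paper: the paper's one-line key step is the injection $\cC_g\to G'$, $h\mapsto g^{-1}h$ (equivalently your inclusion $\cC_g\subseteq gG'$, so $|\cC_g|\le|G'|$ and $|Z_G(g)|\ge |G|/|G'|$), after which it, too, reuses the central/non-central splitting from Theorem \ref{19}, with the same equality criterion. One cosmetic slip: in the paper's convention $[a,b]=a^{-1}b^{-1}ab$, the element $g^{-1}xgx^{-1}$ is $[g,x^{-1}]$ rather than $[g^{-1},x^{-1}]$, but it is a commutator either way, so nothing in the argument is affected.
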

\commentvspace{-10pt}
\begin{proof}
Observe that the function $\phi : \cC_g \to G'$ given by $h\mapsto g^{-1}h$ is an injection, hence $|G'|\geq |\cC_g|=[G:Z_G(g)]$. Rest follows from the proof of Theorem \ref{19}.
\end{proof}
\commentvspace{-10pt}

\section{Further Adventures}\label{sec4}
Having developed quite some background about commuting probability, one might ask - what else? This was just the tip of the iceberg. Here are a few paths in which one could proceed.

\textbf{Topological Properties} Define $\mathcal{P}:=\{\cp(G) : |G|<\infty\}$. It is clear that $\mathcal{P}\subseteq (0,0.625]\cup \{1\}$. According to \cite{mac74}, there are quite a few ``gaps'' in this set. It is fairly obvious that the derived set $\mathcal{P}'$ contains $0$ and is a subset of $[0,0.625]$. What else is there in $\mathcal{P}'$? Such questions were first asked by Keith Joseph in \cite{jos77}, who proposed that $\cp$ is a naturally well ordered set (using $>$) and $\bar{\mathcal{P}} = \{0\} \cup \mathcal{P}$. This is a pretty amazing claim! However, as one might expect, the proof is pretty complicated and, to the knowledge of the author, certain parts are yet to be proven. A suitable reference is \cite{sean15}. 

\textbf{Infinite Groups} Suppose $G$ is a (locally) compact topological group. Over here, the \textit{Haar measure} can be used to define a commuting probability. As indicated by \cite{gust73}, the $5-8$ Theorem, with a small modification, is still valid in such a set-up. One may then proceed to ask other questions about commuting probability. A few of these results have been considered in \cite{toit20}. 

\textbf{Other Algebraic Structure} Instead of looking at groups, one could venture into the realm of finite rings, algebras, non-associative rings and so on. For example, one may start with \cite{bas17}. 

\textbf{Isoclinism} Isoclinism is a phenomena introduced by Philip Hall to classify $p-$groups. It is a generalisation of isomorphism of groups. Recall that we have a \textit{commutator map}  
$\phi : G/Z(G) \times G/Z(G) \to G'$ given by $(aZ(G), bZ(G)) \mapsto  [a,b]$. We say two groups $G_1$ and $G_2$ are \textit{isoclinic} if there commutator maps are, effectively, the same. That is, we have (a) $G_1/Z(G_1) \cong G_2/Z(G_2)$ via some $\psi$ (b) $G_1' \cong G_2'$ via some $\theta$ (c) If $\phi_i$ is the commutator map of $G_i$, then $\theta \circ \phi_1 = \phi_2  \circ \psi \times \psi$ as maps. So the isomorphisms commute with commutator maps. Remarkably, if two groups are isoclinic, they have the same commuting probability. A sample reference for such considerations is \cite{paul95}. 

\textbf{Other Probabilities} There are many more interesting probabilities on a group $G$ of which we list a few. Let $n\geq 2$.
\begin{enumerate}
\item Probability that an arbitrary $n-$tuple in $G$ commutes, that is \[\bP \left((g_1,\ldots, g_n) \in G^n : \prod_{i=1}^n g_i = \prod_{i=1}^n g_{\sigma(i)}, \forall \sigma \in S_n\right).\] 
\item Probability that $n$ randomly chosen elements generates $G$.
\item Commuting probability of a subgroup with respect to a group, that is, $\bP ((g, h) \in G\times H : gh=hg)$.
\item Probability that two arbitrarily selected elements are conjugates, or in general, satisfy some group theoretic property. 
\end{enumerate}

\section*{Acknowledgement}
The author would like to thank the professors and students of ISI Bangalore for inspiring many aspects of this article. A special thanks to Prof. Yogeshwaran D., Prof. Parthanil Roy and Prof. B. Sury for their suggestions to improve the write-up of the article.

\end{document}